\newtheoremstyle{plainNoItalics}{}{}{\normalfont}{}{\bfseries}{.}{ }{}
\theoremstyle{plain}
\newtheorem{thm}{Theorem}[section]
\newtheorem{defn}[thm]{Definition}
\newtheorem{prop}[thm]{Proposition}
\newtheorem{exa}[thm]{Example}
\newcommand{\beq}{\begin{equation}}
\newcommand{\eeq}{\end{equation}}
\newcommand{\beqa}{\begin{eqnarray}}
\newcommand{\eeqa}{\end{eqnarray}}
\newcommand{\bit}{\begin{itemize}}
\newcommand{\eit}{\end{itemize}}
\newcommand{\bedef}{\begin{defn}}
\newcommand{\edefn}{\end{defn}}
\newcommand{\bpro}{\begin{prop}}
\newcommand{\epro}{\end{prop}}
\newcommand{\eps}{\varepsilon}
\newcommand{\bx}{{\bf x}}
\newcommand{\bv}{{\bf v}}
\newcommand{\bu}{{\bf u}}
\newcommand{\bn}{{\bf n}}
\newcommand{\bA}{{\bf A}}
\newcommand{\bB}{{\bf B}}
\newcommand{\bI}{{\bf I}}
\newcommand{\bV}{{\bf V}}
\title[High order AP schemes for MHD in all sonic Mach number]{High order asymptotic preserving finite difference WENO schemes with constrained transport for MHD equations in all sonic Mach numbers}
\keywords{all sonic Mach number; MHD equations; divergence-free; asymptotic preserving; SI IMEX-RK; finite difference WENO}
\begin{document}
	
\maketitle
\medskip
\centerline{\scshape Wei Chen}
\medskip
{\footnotesize
\centerline{School of Mathematical Sciences, Xiamen University}
\centerline{Xiamen, Fujian, 361005, P.R. China}
\centerline{Email: chenwei8@stu.xmu.edu.cn}
}

\medskip
\centerline{\scshape Kailiang Wu\footnote{This work is supported in part by NSFC grant 12171227}}
\medskip
{\footnotesize
	\centerline{Department of Mathematics \& SUSTech International Center for Mathematics}
	\centerline{Southern University of Science and Technology}
    \centerline{National Center for Applied Mathematics Shenzhen (NCAMS)}		
	\centerline{Shenzhen, Guangdong 518055, China}
	\centerline{Email: wukl@sustech.edu.cn}
}

\medskip
\centerline{\scshape Tao Xiong\footnote{Corresponding author. The work of this author was partially supported by NSFC grant No. 11971025, and the Strategic Priority Research Program of the Chinese Academy of Sciences Grant No. XDA25010401.}}
\medskip
{\footnotesize
\centerline{School of Mathematical Sciences, Xiamen University}
\centerline{Fujian Provincial Key Laboratory of Mathematical Modeling and High-Performance Scientific Computing}
\centerline{Xiamen, Fujian, 361005, P.R. China}
\centerline{Email: txiong@xmu.edu.cn}
}

\bigskip

\begin{abstract}
In this paper, a high-order semi-implicit (SI) asymptotic preserving (AP) and divergence-free finite difference weighted essentially nonoscillatory (WENO) scheme is proposed for magnetohydrodynamic (MHD) equations. We consider the sonic Mach number $\eps $ ranging from 
$0$ to $\mathcal{O}(1)$. High-order accuracy in time is obtained by SI implicit-explicit Runge–Kutta (IMEX-RK) time discretization. High-order accuracy in space is achieved by finite difference WENO schemes with characteristic-wise reconstructions. A constrained transport method is applied to maintain a discrete divergence-free condition. We formally prove that the scheme is AP. Asymptotic accuracy (AA) in the incompressible MHD limit is obtained if the implicit part of the SI IMEX-RK scheme is stiffly accurate. Numerical experiments are provided to validate the AP, AA, and divergence-free properties of our proposed approach. Besides, the scheme can well capture discontinuities such as shocks in an essentially non-oscillatory fashion in the compressible regime, while it is also a good incompressible solver with uniform large-time step conditions in the low sonic Mach limit.
\end{abstract}

\vspace{0.1cm}

\section{Introduction}
\label{sec1}
\setcounter{equation}{0}
\setcounter{figure}{0}
\setcounter{table}{0}
Ideal magnetohydrodynamic (MHD) equations are widely used in the modeling of weather prediction, astrophysics, as well as laboratory plasma applications such as flows in tokamaks and stellarators. Many shock capturing schemes with explicit time discretizations have been developed for solving compressible ideal MHD equations, including high order discontinuous Galerkin \cite{li2005locally,li2012arbitrary,wu2018provably,wu2022provably}, finite difference \cite{minoshima2019high,christlieb2018high,christlieb2014finite,christlieb2016high,christlieb2015positivity}, and finite volume schemes \cite{leidi2022finite,xu2016divergence,balsara2015divergence,susanto2013high,wu2021weno}, etc.

For MHD equations, divergence free of the magnetic field is a very important property. If the initial magnetic field is divergence-free, this condition will be maintained for all later times. Numerically, if this divergence-free condition is violated seriously, an unphysical force will be created to parallel the magnetic field \cite{christlieb2016high}, which can produce some numerical instabilities or nonphysical features in the computed solution, cause the loss of pressure positivity \cite{wu2018positivity,wu2019provably}, and/or lead to the failure of the simulation. The constrained transport (CT) methodology, which was originally introduced by Evans and Hawley\cite{evans1988simulation}, is an effective way to maintain the (discrete) divergence-free condition up to machine precision. Many improvements and extensions \cite{balsara1999staggered,balsara2004second,dai1998simple,dai1998divergence,christlieb2018high,christlieb2016high,christlieb2015positivity,christlieb2014finite,helzel2011unstaggered} are followed.  

Recently, the MHD system in the low sonic Mach limit has attracted a lot of interests \cite{mamashita2021slau2,leidi2022finite,dumbser2019divergence,jiang2010incompressible,jiang2012low,cui2015incompressible,minoshima2021low}. Jiang, Ju, and Li \cite{jiang2010incompressible} have shown that a weak solution of compressible MHD equations will converge to a strong solution of their corresponding incompressible MHD equations in this low sonic Mach limit. They have also investigated the low Mach limit of full MHD equations with a heat conductivity \cite{jiang2012low}. Cui, Ou, and Ren \cite{cui2015incompressible} have established a uniform convergence from full compressible MHD equations to isentropic incompressible MHD equations with well-prepared initial conditions in the three-dimensional case. 
Numerically it is also very attractive to develop schemes for MHD flows at any speed. For the above mentioned explicit shock capturing schemes, when applied to the MHD system in the low sonic Mach regime, the time step is subject to a very strict CFL condition which is proportional to the sonic Mach number, making it very undesirable for all speed flows. An implicit time discretization can release the small time step restrictions from an explicit one, but usually it results in a highly nonlinear system, which is not easy to be solved efficiently with an iterative method and it is hard to guarantee a fast convergence.  Furthermore, numerical viscosities for such schemes are inversely proportional to the sonic Mach number, which introduce excessive numerical dissipations. Instead, in recent years some researchers are in pursuit of semi-implicit (SI) schemes with low dissipations. In \cite{dumbser2019divergence},  Dumbser et. al. developed a pressure-based SI finite volume all Mach number flow solver for compressible MHD equations. Minoshima and Miyoshi \cite{minoshima2021low} proposed a multistate low-dissipation advection upstream splitting method for MHD with both high and low Mach numbers, by using a Harten-Lax-van Leer discontinuities (HLLD) approximate Riemann solver. Leidi et al. \cite{leidi2022finite} presented an SI finite volume solver with a 5-wave HLLD and a well-balanced method to efficiently simulate MHD flows at low Mach numbers with a gravitational source. 

For the MHD system with all sonic Mach numbers, it is important to design schemes with asymptotic stability and consistency as the sonic Mach number goes to $0$ in the incompressible limit, namely, asymptotic preserving (AP). AP schemes have been widely used in a variety of areas, such as hydrodynamic or diffusive limits of kinetic models, relaxation limits of hyperbolic models, and low Mach number limits of compressible fluid models. We refer readers to the papers \cite{jin1999efficient,jin2022asymptotic,jin2010asymptotic,jin2022spatial,filbet2010class,dimarco2012high,dimarco2017study} for more information. To achieve high order AP schemes in time, SI implicit-explicit Runge-Kutta (IMEX-RK) methods are widely used, which apply an explicit time discretization for  nonstiff terms, while an implicit time discretization for stiff terms \cite{boscarino2007error,boscarino2013implicit,boscarino2016high,boscarino2018implicit,boscarino2019high,boscarino2022high,kanevsky2007application,kadioglu2011second,o2017simulation,cao2016implicit,meng2020fourth,Boscheri2021high}. With a suitable choice of explicit and implicit discretizations, both high efficiency and uniform stability independent of the stiff parameter can be obtained. As related, in \cite{tavelli2017pressure} Tavelli and Dumbser developed an SI space-time discontinuous Galerkin method for all Mach-number flow, which involves staggered meshes and a Picard iteration for solving a large nonlinear system. In \cite{boscarino2019high,boscarino2022high}, Boscarino, Qiu, Russo, and Xiong presented high-order SI IMEX weighted essentially nonoscillatory (WENO) schemes for all-Mach isentropic Euler equations and all-Mach full Euler system, respectively. In \cite{Boscheri2021high}, Boscheri and Pareschi developed high order pressure-based semi-implicit IMEX schemes for the three dimensional Navier-Stokes equations at all Mach numbers. In \cite{huang2022high}, Huang, Xing, and Xiong designed a high-order well-balanced SI AP scheme for shallow water equations in all Fraude numbers.
We refer to many other related works from the references therein.

In this paper, we would like to develop a high-order AP finite difference WENO scheme with CT for MHD equations in all sonic Mach numbers. To our best knowledge, there are very few AP schemes for MHD systems in literature. The new contributions and innovations of this work are outlined as follows:
\begin{itemize}
	\item In the low sonic Mach number regime, SI IMEX-RK methods are adopted for time discretization to obtain a uniform time step independent of the sonic Mach number. Besides, to avoid a nonlinearity from the equation of state (EOS), an SI approach 
	is used, leading 
	to a linearized elliptic equation for the pressure. With carefully designed explicit and implicit discretizations, only a linear system needs to be solved. The resulting scheme is more efficient than explicit shock capturing schemes, especially in the low sonic Mach regime.
	\item 
	Numerical viscosities are carefully designed to avoid excessive numerical dissipations, and a CT method is applied to maintain a discrete divergence-free property. The scheme can well capture discontinuities such as shocks in an essentially non-oscillatory fashion in the compressible regime. Meanwhile, the scheme is also a good incompressible solver as $\eps\rightarrow 0$. Due to less dissipation, our scheme can be shown to perform better than explicit schemes for low sonic Mach problems.
	\item
	Formal AP and asymptotically accurate (AA) properties in the stiff limit as the sonic Mach number $\eps \rightarrow 0$ are proved, by assuming the implicit part of an SI IMEX-RK scheme is stiffly accurate (SA).
\end{itemize}

The rest of the paper is as follows. In Section \ref{sec2}, we will briefly review the ideal MHD system, corresponding to its low sonic Mach limit based on asymptotic expansions, and a CT methodology for the divergence-free condition. A first-order SI scheme in time will first be introduced in Section \ref{sec3}, and then a high-order SI scheme with IMEX-RK in time and finite difference WENO in space is followed. Formal proofs of AP and  AA properties are given in Section \ref{sec4}. Numerical experiments are performed in Section \ref{sec5}. A brief conclusion will be drawn in the last section.


\section{Equations of compressible ideal MHD}
\label{sec2}
\setcounter{equation}{0}
\setcounter{figure}{0}
\setcounter{table}{0}

The ideal MHD equations in a conservative form can be written as
\begin{equation}
\frac{\partial }{\partial t}
\begin{bmatrix}
    \rho  \\ 
    \rho \mathbf{u}\\  
    \mathbf{B}   \\ 
    E     
\end{bmatrix}
+
\nabla \cdot
\begin{bmatrix}
    \rho \mathbf{u} \\  
    \rho \mathbf{u} \otimes \mathbf{u} + (p + \frac{1}{2} {\Vert \mathbf{B} \Vert}^2) \mathbf{I} - \mathbf{B} \otimes \mathbf{B} \\  
    \mathbf{u} \otimes \mathbf{B} - \mathbf{B} \otimes \mathbf{u}\\  
    (E + p + \frac{1}{2} {\Vert \mathbf{B} \Vert}^2) \mathbf{u} - \mathbf{B} (\mathbf{u} \cdot \mathbf{B})
\end{bmatrix}
=
\mathbf{0},
  \label{S2_E1}
\end{equation}
where $\rho$ is the fluid mass density, $\mathbf{u} = (u, v, w)$ is the velocity, $\mathbf{B} = (B_x, B_y, B_z)$ is the magnetic field, $E$ is the total energy density, $p$ is the gas pressure, and $\Vert \cdot \Vert$ is the Euclidean vector norm. The total energy is given by
\begin{equation}
    E = \frac{p}{\gamma -1} + \frac{1}{2} (\rho {\Vert \mathbf{u} \Vert}^2 + {\Vert \mathbf{B} \Vert}^2),
    \label{S2_E3}
\end{equation}
where $\gamma$ is the specific heat ratio. The system is subject to a divergence-free condition of the magnetic field
\begin{equation}
    \nabla \cdot \mathbf{B} = 0.
    \label{S2_E2}
\end{equation} 
If initially, the divergence-free condition holds, from \eqref{S2_E1}, it also holds for all later times \cite{christlieb2014finite}. For this reason, \eqref{S2_E2} is usually not regarded as a constraint (like the $\nabla \cdot \bu = 0$ constraint for the incompressible Navier-Stokes equations), but rather an involution \cite{helzel2011unstaggered,christlieb2014finite,christlieb2016high}.
\subsection{Eigenstructure of the MHD system}
For \eqref{S2_E1}, the eigenvalues of the Jacobian for the flux function along a normal direction $\bn$ are given as:
\begin{equation}
    \lambda_{1,8} = \bu \cdot \bn \mp c_f, \quad
    \lambda_{2,7} = \bu \cdot \bn \mp c_a, \quad
    \lambda_{3,6} = \bu \cdot \bn \mp c_s, \quad
    \lambda_{4,5} = \bu \cdot \bn, \quad
    \label{S2_E4}
\end{equation}
where $\lambda_{1,8}$, $\lambda_{2,7}$, $\lambda_{3,6}$, $\lambda_{4}$, and $\lambda_{5}$ are the left/right fast
magnetosonic waves, left/right Alfvén waves, left/right slow magnetosonic waves, entropy wave, and divergence wave, correspondingly. The sound speed $a$, Alfvén speed $c_a$, fast magnetosonic speed $c_f$, and slow magnetosonic speed $c_s$ are defined as 
\begin{equation}
    \left\{
    \begin{aligned}
    a &:= \sqrt{\frac{\gamma p}{\rho}}, \\
    c_a &:= \sqrt{\frac{{(\bB \cdot \bn)}^2}{\rho}}, \\
    c_{f,s} &:= {\left\{ \frac{1}{2} \left[ a^2 + \frac{{\Vert \bB \Vert}^2}{\rho} \pm \sqrt{{\left(a^2 + \frac{{\Vert \bB \Vert}^2}{\rho}\right)}^2 - 4a^2 {c^2_a} } \right] \right\}}^{\frac{1}{2}}.
    \end{aligned}\right.
    \label{S2_E5}
\end{equation}
A fixed ordering for the MHD eigenvalues is
\begin{equation*}
    \lambda_1 \leq \lambda_2 \leq \lambda_3 \leq \lambda_4 \leq \lambda_5 \leq \lambda_6 \leq \lambda_7 \leq \lambda_8.
\end{equation*}
Spectral decomposition for this system has already been described in \cite{jiang1999high,brio1988upwind,cargo1997roe,powell1997approximate,powell1999solution,takahashi2013regular}, and we will follow the one in \cite{jiang1999high} for all simulations in the current paper.
\subsection{Non-dimensionalized MHD}
To derive a low sonic Mach limit for the ideal MHD system, we start with rewriting \eqref{S2_E1}, \eqref{S2_E3}, and \eqref{S2_E2} into a dimensionless form. We choose some reference or characteristic values, such as a length $x_0$, a time $t_0$, a fluid mass density $\rho_0$, a velocity $u_0$, a magnetic field $B_0$, and a gas pressure $p_0$, with $u_0 = x_0 / t_0$. For the sake of simplicity, we consider a regime where the characteristic Alfvén Mach number $u_0 \sqrt{\rho_0} / B_0 = 1$. Then we define the following dimensionless variables:
\begin{equation*}
  \hat{{\bx}} = \frac{{\bx}}{x_0}, \quad
  \hat{t} = \frac{t}{t_0}, \quad
  \hat{\rho} = \frac{\rho}{\rho_0}, \quad
  \hat{\bu} = \frac{\bu}{u_0}, \quad
  \hat{\bB} = \frac{\bB}{B_0}, \quad
  \hat{p} = \frac{p}{p_0}.
\end{equation*}
Inserting these new variables into \eqref{S2_E1}, we obtain the following non-dimensionalised MHD equations\cite{leidi2022finite}:
\begin{equation}
\frac{\partial }{\partial t}
\begin{bmatrix}
    \rho  \\
    \rho \mathbf{u}\\
    \mathbf{B}   \\
    E     \\
\end{bmatrix}
+
\nabla \cdot
\begin{bmatrix}
    \rho \mathbf{u}\\
    \rho \mathbf{u} \otimes \mathbf{u} + (\frac{p}{\eps^2} + \frac{1}{2} {\Vert \mathbf{B} \Vert}^2) \mathbf{I} - \mathbf{B} \otimes \mathbf{B}\\
    \mathbf{u} \otimes \mathbf{B} - \mathbf{B} \otimes \mathbf{u}\\
    (E + p + \frac{\eps^2}{2} {\Vert \mathbf{B} \Vert}^2) \mathbf{u} - \eps^2 \mathbf{B} (\mathbf{u} \cdot \mathbf{B})
\end{bmatrix}
=
\mathbf{0},
\label{S2_E8}
\end{equation}
where we have dropped the hats of the dimensionless variables for ease of presentation. The EOS, divergence-free condition and the characteristic sonic Mach number $\eps$ are now given by
\begin{equation}
    E := \frac{p}{\gamma - 1} + \frac{\eps^2}{2} (\rho {\Vert \bu \Vert}^2 + {\Vert \bB \Vert}^2), \quad
    \nabla \cdot \bB = 0, \quad\eps := u_0 \sqrt{\frac{\rho_0}{p_0}}.
    \label{S2_E9}
\end{equation}
In this dimensionless MHD system \eqref{S2_E8} with \eqref{S2_E9}, the expressions of those eigenvalues \eqref{S2_E4} remain unchanged, except the sound speed $a$, Alfvén speed $c_a$, fast magnetosonic speed $c_f$, and slow magnetosonic speed $c_s$ from \eqref{S2_E5} now become
\begin{equation*}
    \left\{
    \begin{aligned}
    a &:= \frac{1}{\eps} \sqrt{\frac{\gamma p}{\rho}}, \\
    c_a &:= \sqrt{\frac{{(\bB \cdot \bn)}^2}{\rho}}, \\
    c_{f,s} &:= {\left\{ \frac{1}{2} \left[ a^2 + \frac{{\Vert \bB \Vert}^2}{\rho} \pm \sqrt{{ \left(a^2 + \frac{{\Vert \bB \Vert}^2}{\rho} \right)}^2 - 4a^2 {c^2_a} } \right] \right\}}^{\frac{1}{2}}.
    \end{aligned}\right.
\end{equation*}
Following the eigenstructure of the dimensionless MHD system discussed above, it is natural to find that if the background flow velocity $u_0$ is
slow, then the sonic Mach number $\eps = u_0 \sqrt{\rho_0} / \sqrt{p_0} \ll 1$, and the fast magnetosonic speed $c_f$ becomes very large. We can see that large magnetosonic speed $c_f$ will make the fast magnetosonic waves $\lambda_{1,8}$ to be much faster as compared to other waves. In this situation, for explicit shock-capturing schemes, the time step subjects to
\begin{equation*}
    \Delta t = {\rm CFL} \frac{\Delta x}{\mathop{\max}\limits_{1 \leq i \leq 8} \vert \lambda_i \vert} =  \mathcal{O} (\eps \Delta x),
\end{equation*}
where $\Delta t$ is the time step size, $\Delta x$ is the mesh size, and ${\rm CFL}$ is the time stability CFL number. On one hand, highly inaccurate solutions will appear due to the excessive numerical viscosity (scales as $\eps^{-1}$) in standard upwind schemes. On the other hand, this restriction results in an increasingly large computational cost for low sonic Mach fluid flows. A straight approach is to apply an implicit time discretization to avoid such severe time step constraints. However, a fully implicit scheme will result in a complicated nonlinear system which is usually very hard to be solved efficiently. Besides, it may not be able to guarantee a correct asymptotic limit. Thus, it is important to design efficient schemes with both asymptotic stability and consistency in the low sonic Mach limit, namely, the AP property.
\subsection{Low sonic Mach limit}
\label{2.3}
Next, we recall a formal derivation of the incompressible MHD system from the non-dimensionalized compressible MHD equations \eqref{S2_E8} with \eqref{S2_E9}. We consider an asymptotic expansion ansatz for the following variables \cite{matthaeus1988nearly}:
\begin{equation}
    \left\{
    \begin{aligned}
    &p(\mathbf{x},t) = p_0(\mathbf{x},t) + \eps p_1(\mathbf{x},t) + \eps^2 p_2(\mathbf{x},t) + \cdots,\\
    &\rho(\mathbf{x},t) = \rho_0(\mathbf{x},t) + \eps \rho_1(\mathbf{x},t) + \eps^2 \rho_2(\mathbf{x},t) + \cdots,\\
    &\mathbf{u}(\mathbf{x},t) = \mathbf{u}_0(\mathbf{x},t) + \eps \mathbf{u}_1(\mathbf{x},t) + \eps^2 \mathbf{u}_2(\mathbf{x},t) + \cdots,\\
    &\mathbf{B}(\mathbf{x},t) = \mathbf{B}_0(\mathbf{x},t) + \eps \mathbf{B}_1(\mathbf{x},t) + \eps^2 \mathbf{B}_2(\mathbf{x},t) + \cdots.
    \end{aligned}\right.
    \label{S2_E12}
\end{equation}
Inserting \eqref{S2_E12} into \eqref{S2_E8} and \eqref{S2_E9}, equating to zero for different orders of $\eps$, corresponding to the first three leading orders, we have 
\bit
\item $\mathcal{O}(\eps^{-2})$
\begin{equation*}
    \nabla p_0 = 0,
\end{equation*}
\item $\mathcal{O}(\eps^{-1})$
\begin{equation*}
    \nabla p_1 = 0,
\end{equation*}
\item $\mathcal{O}(\eps^{0})$
\begin{equation}
\frac{\partial }{\partial t}
\begin{bmatrix}
    \rho_0  \\
    \rho_0 \mathbf{u}_0\\
    \mathbf{B}_0   \\
    E_0     \\
\end{bmatrix}
+
\nabla \cdot
\begin{bmatrix}
    \rho_0 \mathbf{u}_0\\
    \rho_0 \mathbf{u}_0 \otimes \mathbf{u}_0 + (p_2 + \frac{1}{2} {\Vert \mathbf{B}_0 \Vert}^2) \mathbf{I} - \mathbf{B}_0 \otimes \mathbf{B}_0\\
    \mathbf{u}_0 \otimes \mathbf{B}_0 - \mathbf{B}_0 \otimes \mathbf{u}_0\\
    (E_0 + p_0) \mathbf{u}_0
\end{bmatrix}
=
\mathbf{0},
\label{S2_E15}
\end{equation}
\eit
where we have assumed a similar expansion of $E$ as $p$ in \eqref{S2_E12}, and take $E_0 = p_0 / (\gamma - 1)$. Denoting the material derivative ${\rm d/d}t = \partial / \partial t + \bu_0 \cdot \nabla$, from the energy equation in \eqref{S2_E15}, $E_0 = p_0 / (\gamma - 1)$, with $\nabla p_0 = 0$, we have
\begin{equation}
    \nabla \cdot \bu_0 = - \frac{1}{p_0 \gamma} \frac{{\rm d} p_0}{{\rm d}t}.
    \label{S2_E16}
\end{equation}
Taking into account periodic or no-slip $\bu \cdot \bn = 0$ boundary conditions on the spatial domain $\Omega$, and integrating \eqref{S2_E16} over $\Omega$, we get $\int_{\Omega} \nabla \cdot \bu_0 d \bx = \int_{\partial \Omega} \bu_0 \cdot \bn d s = 0$, where $\bn$ is the unit outward normal vector
along $\partial \Omega$. This implies $p_0$ is constant not only in space but also in time, that is $p_0 = {\rm Const}$. A direct result $\nabla \cdot \bu_0 = 0$ can be obtained through \eqref{S2_E16} with $p_0 = {\rm Const}$. Inserting $\nabla \cdot \bu_0 = 0$ into the mass-continuity equation in \eqref{S2_E15}, we get ${\rm d} \rho_0 / {\rm d}t = 0$ which means that the density is constant along the characteristic trajectories. 

For the order $\mathcal{O} (\eps)$ in \eqref{S2_E8}, we have: 
$$\partial E_1 / \partial t + \nabla \cdot [(E_0 + p_0) \bu_1 + (E_1 + p_1) \bu_0] = 0.$$ 
Due to $E_1 = p_1 / (\gamma - 1)$ from the EOS in \eqref{S2_E9}, similarly as above, we get $p_1 = {\rm Const}$ and $\nabla \cdot \bu_1 = 0$. In this case, we include $\eps \mathbf{u}_1$ into $\mathbf{u}_0$, and $\eps p_1$ into $p_0$, the asymptotic expansions of $\bu$ and $p$ become 
\begin{equation}
	\mathbf{u} = \mathbf{u}_0 + \eps^2 \mathbf{u}_2 + \cdots,
	\label{ep_u}
\end{equation}
and 
\begin{equation}
	p = p_0 + \eps^2 p_2 + \cdots.
	\label{ep_p}
\end{equation}
Using such expansions for $\mathbf{u}$ and $p$, as $\eps \rightarrow 0$, the leading order of the system \eqref{S2_E8} yields
\begin{equation}
    \left\{
    \begin{aligned}
    &\frac{\partial }{\partial t}
    \begin{bmatrix}
        \rho_0 \mathbf{u}_0\\
        \mathbf{B}_0   \\
    \end{bmatrix}
    +
    \nabla \cdot
    \begin{bmatrix}
        \rho_0 \mathbf{u}_0 \otimes \mathbf{u}_0 + (p_2 + \frac{1}{2} {\Vert \mathbf{B}_0 \Vert}^2) \mathbf{I} - \mathbf{B}_0 \otimes \mathbf{B}_0\\
        \mathbf{u}_0 \otimes \mathbf{B}_0 - \mathbf{B}_0 \otimes \mathbf{u}_0
    \end{bmatrix}
    =
    \mathbf{0},\\
    &\frac{{\rm d} \rho_0}{{\rm d}t} = 0, \quad
    p_0 = {\rm Const}, \quad
    \nabla \cdot \bu_0 = 0, \quad
     \nabla \cdot \bB = 0.\\
    \end{aligned}\right.
    \label{S2_E17}
\end{equation}

Numerically, in order to have a correct asymptotic limit from \eqref{S2_E8} to \eqref{S2_E17}, it is important to impose well-prepared initial conditions, which are consistent with the expansions given in \eqref{S2_E12}, with $\bu$ and $p$ from \eqref{ep_u} and \eqref{ep_p}. According to \cite{dellacherie2010analysis,klein1995semi}, the well-prepared initial conditions for \eqref{S2_E8} are given as
\begin{equation}
    \left\{
    \begin{aligned}
    &\rho(t = 0, \bx) = \rho_0 (\bx) + \mathcal{O} (\eps) > 0,\quad
    &&p(t = 0, \bx) = p_0 + \eps^2 p_2(\bx) + \cdots, \\
    &\bu(t = 0, \bx) = \bu_0 (\bx) + \mathcal{O} (\eps^2),\quad
    &&\bB(t = 0, \bx) = \bB_0(\bx) + \mathcal{O} (\eps),    
    \end{aligned}\right.
    \label{S2_E18}
\end{equation}
where $\rho_0 (\bx)$ is a strictly positive function, $p_0 = {\rm Const}$, $\nabla \cdot \bu_0 = 0$, and $\nabla \cdot \bB_0 = 0$.
\subsection{CT for $\nabla \cdot \bB = 0$}
Many CT methods have been proposed to satisfy the discrete divergence-free condition $\nabla \cdot \bB = 0$ for the ideal MHD equations \cite{christlieb2014finite,christlieb2015positivity,christlieb2016high,christlieb2018high}. Here we consider the one described in \cite{christlieb2014finite}, in which the magnetic field is written as the curl of a magnetic vector potential $\bA$:
\begin{equation}
    \bB = \nabla \times \bA.
    \label{S2_E19}
\end{equation}
Due to $\nabla \cdot (\bu \otimes \bB - \bB \otimes \bu) = \nabla \times (\bB \times \bu)$, the equation of the magnetic field $\bB$ in the MHD system can be rewritten as
\begin{equation}
    \frac{\partial \bB}{\partial t} + \nabla \times (\bB \times \bu) = 0.
    \label{S2_E20}
\end{equation}
Inserting \eqref{S2_E19} into \eqref{S2_E20}, we further get
\begin{equation*}
    \nabla \times \left(\frac{\partial \bA}{\partial t} + (\nabla \times \bA) \times \bu \right) = 0,
\end{equation*}
which implies the existence of a scalar function $\psi$ such that
\begin{equation*}
    \frac{\partial \bA}{\partial t} + (\nabla \times \bA) \times \bu = -\nabla \psi.
\end{equation*}
Stable solutions can be obtained by introducing a Weyl gauge, i.e. $\psi \equiv 0$ \cite{helzel2011unstaggered}, and the evolution equation for the vector potential becomes 
\begin{equation*}
    \frac{\partial \bA}{\partial t} + (\nabla \times \bA) \times \bu = 0.
\end{equation*}
It is easy to check that $\nabla \cdot \bB = \nabla \cdot (\nabla \times \bA) = 0$, i.e. the divergence-free condition will be satisfied for all times. In summary, we get the following MHD system:
\begin{equation}
    \left\{
    \begin{aligned}
    &\frac{\partial \rho}{\partial t} + \nabla \cdot \left(\rho \bu \right) = 0,\\
    &\frac{\partial \rho \bu }{\partial t} + \nabla \cdot \left(\rho \mathbf{u} \otimes \mathbf{u} + \left(\frac{p}{\eps^2} + \frac{1}{2} {\Vert \mathbf{B} \Vert}^2\right) \mathbf{I} - \mathbf{B} \otimes \mathbf{B}\right) = \mathbf{0},\\
    &\frac{\partial \bA}{\partial t} + (\nabla \times \bA) \times \bu = \mathbf{0},\\
    &\frac{\partial E}{\partial t} + \nabla \cdot \left( \left(E + p + \frac{\eps^2}{2} {\Vert \mathbf{B} \Vert}^2 \right) \mathbf{u} - \eps^2 \mathbf{B} \left(\mathbf{u} \cdot \mathbf{B} \right) \right) = 0,\\
    &\bB - \nabla \times \bA = \mathbf{0}.\\
    \end{aligned}\right.
    \label{S2_E24}
\end{equation}
Corresponding to \eqref{S2_E24}, with well-prepared initial conditions \eqref{S2_E18}, the low sonic Mach limit incompressible MHD system becomes
\begin{equation}
    \left\{
    \begin{aligned}
    \label{S2_E26}
    &\frac{d\rho_0}{dt}=0, \quad p_0 = {\rm Const}, \quad
    \nabla \cdot \bu_0 = 0, \quad \nabla \cdot \mathbf{B}_0 = 0,\\
    &\frac{\partial \rho_0 \mathbf{u}_0}{\partial t} + \nabla \cdot \left(  \rho_0 \mathbf{u}_0 \otimes \mathbf{u}_0 + \left(p_2 + \frac{1}{2} {\Vert \mathbf{B}_0 \Vert}^2 \right) \mathbf{I} - \mathbf{B}_0 \otimes \mathbf{B}_0 \right) = \mathbf{0},\\
    &\frac{\partial \bA_0}{\partial t} + (\nabla \times \bA_0) \times \bu_0 = \mathbf{0},\\
    &\bB_0 - \nabla \times \bA_0 = \mathbf{0}.
    \end{aligned}\right.
\end{equation}


\section{Numerical schemes}
\label{sec3}
\setcounter{equation}{0}
\setcounter{figure}{0}
\setcounter{table}{0}

In this section, we will construct and analyze a high-order finite difference scheme with AP and divergence-free properties for the non-dimensionalized MHD system \eqref{S2_E24}. The features of our scheme include the following: we design an SI IMEX-RK time discretization so that the scheme is stable with a time step constraint independent of the sonic Mach number $\eps$; the AP property is preserved in the zero sonic Mach number limit; the scheme can be implemented efficiently without solving a nonlinear system. We will adopt the high order finite difference WENO scheme in \cite{boscarino2022high,boscarino2019high,huang2022high}, and the CT method in \cite{christlieb2014finite}. The final scheme can well capture discontinuities such as shocks  in the compressible regime, while keeping a discrete divergence-free condition in the low sonic Mach regime, with a uniform time step condition independent of $\eps$. We start with a first-order SI scheme. The high-order IMEX-RK scheme in time and a finite difference WENO reconstruction in space will be followed. After that, an overall scheme will be summarized.
\subsection{First order SI IMEX scheme}
\label{3.1}
In the following, we 
start with a first-order SI IMEX time discretization for \eqref{S2_E24}, while keeping space continuous:
\begin{subequations}
    \label{S3_A1}
\begin{equation}
    \frac{\rho^{n+1} - \rho^n}{\Delta t} + \nabla \cdot \mathbf{q}^n = 0,
    \label{S3_E1}
\end{equation}
\begin{equation}
    \frac{\mathbf{q}^{n+1} - \mathbf{q}^n}{\Delta t} + \nabla \cdot \left(\frac{\mathbf{q}^n \otimes \mathbf{q}^n}{\rho^n} - \mathbf{B}^n \otimes \mathbf{B}^n +\left(\frac{1}{2} {\Vert \mathbf{B}^n \Vert}^2 + p^n \right) \mathbf{I}\right) + \frac{1-\eps^2}{\eps^2} \nabla p^{n+1} = \mathbf{0},
    \label{S3_E2}
\end{equation}
\begin{equation}
    \frac{\mathbf{A}^{n+1} - \mathbf{A}^n}{\Delta t} + \left(\nabla \times \bA^{n}\right) \times \frac{\mathbf{q}^n}{\rho^n} = \mathbf{0},
    \label{S3_E3}
\end{equation}
\begin{equation}
    \frac{E^{n+1} - E^n}{\Delta t} + \nabla \cdot \left(\frac{E^n + p^n}{\rho^{n+1}} \mathbf{q}^{n+1} +\eps^2 \left(\frac{1}{2}{\Vert \mathbf{B}^n \Vert}^2 \frac{\mathbf{q}^n}{\rho^n} - \left(\frac{\mathbf{q}^n}{\rho^n} \cdot \mathbf{B}^n \right)\mathbf{B}^n \right) \right) = 0,
    \label{S3_E4}
\end{equation}
\begin{equation}
    \bB^{n+1} - \nabla \times \bA^{n+1} = \mathbf{0},
    \label{S3_E5}
\end{equation}
\end{subequations}
where $\mathbf{q} = \rho \bu$. The updating flow chart based on the SI IMEX scheme \eqref{S3_A1} is as follows:
\begin{itemize}
\item \textbf{Step 1.} Update $\rho^{n+1}$ and $\bA^{n+1}$ from \eqref{S3_E1} and \eqref{S3_E3} respectively, then update $\bB^{n+1}$ from \eqref{S3_E5}.
    
\item \textbf{Step 2.} We rewrite \eqref{S3_E2} as
\begin{equation}
    \mathbf{q}^{n+1} = \mathbf{q}^* - \Delta t \frac{1-\eps^2}{\eps^2} \nabla p^{n+1},
    \label{S3_E6}
\end{equation}
with
\begin{equation*}
    \mathbf{q}^* = \mathbf{q}^n -\Delta t \nabla \cdot \left(\frac{\mathbf{q}^n \otimes \mathbf{q}^n}{\rho^n} - \mathbf{B}^n \otimes \mathbf{B}^n +\left(\frac{1}{2} {\Vert \mathbf{B}^n \Vert}^2 + p^n\right) \mathbf{I}\right). 
\end{equation*}
Substituting $\mathbf{q}^{n+1}$ into \eqref{S3_E4}, we can further get
\begin{equation}
    E^{n+1} = E^* + \Delta t^2 \frac{1-\eps^2}{\eps^2} \nabla \cdot (H^n \nabla p^{n+1}),
    \label{S3_E7}
\end{equation}
where $H^n = (E^n + p^n) / \rho^{n+1}$ and 
\[ 
E^* = E^n -\Delta t \nabla \cdot (H^n \mathbf{q}^*) -\Delta t \eps^2 \nabla \cdot \left(\frac{1}{2}{\Vert \mathbf{B}^n \Vert}^2 \frac{\mathbf{q}^n}{\rho^n} - \left(\frac{\mathbf{q}^n}{\rho^n} \cdot \mathbf{B}^n\right)\mathbf{B}^n\right).
\]
Corresponding to the pressure in the incompressible limit, we introduce a pressure perturbation
$p^{n+1}_2$, defined as 
\begin{equation}
p^{n+1}_2 = (p^{n+1} - \bar{p}^n)/ \eps^2,
\label{p2}
\end{equation} 
where $\bar{p}^n = \int_{\Omega} p^n d \bx / \vert \Omega \vert$, and we replace $E^{n+1}$ by $p^{n+1}/ (\gamma -1) + \eps^2 ({\Vert \mathbf{q}^n \Vert}^2 / (2\rho^n) + {\Vert \mathbf{B}^n \Vert}^2 /2 )$ from \eqref{S3_E7}. We rewrite \eqref{S3_E7} as
\begin{equation}
    \frac{\eps^2 }{\gamma -1} p^{n+1}_2 - \Delta t^2 (1-\eps^2) \nabla \cdot (H^n \nabla p^{n+1}_2) = E^{**},
    \label{S3_E8}
\end{equation}
where $E^{**} = E^* - \bar{p}^n/ (\gamma -1 ) - \eps^2 ({\Vert 
 \mathbf{q}^n \Vert}^2 / (2\rho^n) + {\Vert \mathbf{B}^n \Vert}^2 /2 )$. We can now update $p_2^{n+1}$ from the elliptic equation \eqref{S3_E8}.

\item \textbf{Step 3.} We update $\mathbf{q}^{n+1}$ from \eqref{S3_E6} by using \eqref{p2}, and then $E^{n+1}$ from \eqref{S3_E4}.
\end{itemize}
As described in \cite{boscarino2022high}, we may set $\nabla \cdot (\alpha p^n \bI)$ and $(1-\alpha \eps^2) \nabla p^{n+1} /\eps^2 $ to replace $\nabla \cdot (p^n \bI)$ and $(1- \eps^2) \nabla p^{n+1} /\eps^2 $ in \eqref{S3_E2} with $\alpha = 1$ for $\eps < 1$ and $\alpha = 1 / \eps^2$ for $\eps \geq 1$. Note that if $\eps \geq 1$ the implicit pressure contribution in \eqref{S3_E2}  vanishes, so the momentum $\mathbf{q}^{n+1}$ is evaluated explicitly. With updated $\rho^{n+1}$, $\bA^{n+1}$, $\bB^{n+1}$, and $\mathbf{q}^{n+1}$, $E^{n+1}$ can also be updated in an explicit way from \eqref{S3_E4}. In one dimension, the divergence-free condition is satisfied automatically, so there is no need to apply a CT method. We can replace \eqref{S3_E3} and \eqref{S3_E5} with 
\begin{equation}
    \frac{\mathbf{B}^{n+1} - \mathbf{B}^n}{\Delta t} + \nabla \cdot \left(\frac{\mathbf{q}^n \otimes \mathbf{B}^n}{\rho^n} - \frac{\mathbf{B}^n \otimes \mathbf{q}^n}{\rho^n}\right) = 0.
    \label{S3_replace}
\end{equation}
The first step in the flow chart becomes: update $\rho^{n+1}$ and $\bB^{n+1}$ from \eqref{S3_E1} and \eqref{S3_replace} respectively, and other steps remain unchanged.
\subsection{High order SI IMEX-RK scheme}
\label{3.2}
In this subsection, we will follow a similar procedure as in \cite{boscarino2022high,boscarino2019high} to extend the first-order SI IMEX scheme \eqref{S3_A1} to high order. For simplicity, we only consider \eqref{S2_E8} with CT methods, namely, \eqref{S2_E24}. 

First, \eqref{S2_E24} can be written as an autonomous system:
\begin{equation*}
    \left\{
    \begin{aligned}
    &U_t = \mathcal{H} (U,U,\bB), \\
    &\bB = \nabla \times \bA,
    \end{aligned}\right.
\end{equation*}
where $U = {(\rho, \mathbf{q}, \bA, E)}^T$ and $\mathcal{H}: \mathbb{R}^n \times \mathbb{R}^n \times \mathbb{R}^m \rightarrow \mathbb{R}^n$ is a sufficiently regular mapping. We use subscript ``E" to express an explicit treatment for the first argument $U$ and the third one $\bB$, and subscript ``I" to express an implicit treatment for the second $U$.
For the system \eqref{S2_E24}, the function $\mathcal{H} (U_E,U_I,\bB_E)$ is defined as
\begin{equation*}
    \begin{aligned}
    \mathcal{H} (U_E,U_I,\bB_E) = 
    &- \nabla \cdot
    \begin{pmatrix}
	\mathbf{q}_E \\
    \frac{\mathbf{q}_E \otimes \mathbf{q}_E}{\rho_E} - \mathbf{B}_E \otimes \mathbf{B}_E +(\frac{1}{2} {\Vert \mathbf{B}_E \Vert}^2 + p_E) \mathbf{I} \\
    \mathbf{0} \\
    0 \\
	\end{pmatrix}
    -
    \begin{pmatrix}
	0 \\
    \mathbf{0} \\
    (\nabla \times \bA_E) \times \frac{\mathbf{q}_E}{\rho_E}\\
    0
	\end{pmatrix} \\
    &- \nabla \cdot
    \begin{pmatrix}
	0 \\
    \mathbf{0} \\
    \mathbf{0} \\
    \eps^2 (\frac{1}{2}{\Vert \mathbf{B}_E \Vert}^2 \frac{\mathbf{q}_E}{\rho_E} - (\frac{\mathbf{q}_E}{\rho_E} \cdot \mathbf{B}_E)\mathbf{B}_E)
	\end{pmatrix}
    - \nabla \cdot
    \begin{pmatrix}
	0 \\
    (1-\eps^2)p_{I,2} \mathbf{I} \\
    \mathbf{0} \\
    H_I \mathbf{q}_I
	\end{pmatrix} \\ \, \\
    =& -\nabla \cdot \mathcal{F}_E^1 - \mathcal{G}_{E} - \nabla \cdot 
\mathcal{F}_E^2 - \nabla \cdot \mathcal{F}_{SI},
    \end{aligned}
\end{equation*}
with $H_I = (E_E + p_E)/\rho_I$ and $p_{I,2} = (p_I - \bar{p}_E) / \eps^2$.

To obtain high-order accuracy in time, we can apply a multi-stage IMEX-RK time discretization with a double Butcher $tableau$,
\begin{equation*}\label{DBT}
    \begin{array}{c|c}
    \tilde{c} & \tilde{A}\\
    \hline
    \vspace{-0.25cm}
    \\
    & \tilde{b}^T \end{array} \ \ \ \ \ \qquad
    \begin{array}{c|c}
    {c} & {A}\\
    \hline
    \vspace{-0.25cm}
    \\
    & {b^T} \end{array},
\end{equation*}
where $\tilde{A} = (\tilde{a}_{ij})$ is an $s \times s$ matrix for an explicit scheme, with $\tilde{a}_{ij}=0$ for $j \geq i$, and $A = ({a}_{ij})$ is an $s \times s$ matrix for an implicit scheme. For the implicit part of the methods, we use a diagonally implicit scheme, i.e. $a_{ij}=0$, for $j > i$, to guarantee simplicity and efficiency in solving the algebraic equations corresponding to the implicit part of the discretization. The vectors $\tilde{c}=(\tilde{c}_1,...,\tilde{c}_s)^T$, $\tilde{b}=(\tilde{b}_1,...,\tilde{b}_s)^T$, $c=(c_1,...,c_s)^T$, and $b=(b_1,...,b_s)^T$ complete the characterization of the scheme. The coefficients $\tilde{c}$ and $c$ are given by the usual relation
\begin{eqnarray*}\label{eq:candc}
    \tilde{c}_i = \sum_{j=1}^{i-1} \tilde a_{ij}, \ \ \ c_i = \sum_{j=1}^{i} a_{ij}.
\end{eqnarray*}
Later, for the consideration of AP property, we consider implicit schemes with SA property, i.e., the implicit part of the Butcher table satisfies the condition $b^T = e_s^TA$, with $e_s = (0, \cdots 0,1)^T$. We will see that the SA property guarantees that the numerical solution is identical to the last internal stage value of the scheme. Now, we may update the solutions as follows. Starting from $U_E^{(0)}=U_I^{(0)}=U^n$ and $\bB_E^{(0)} = \bB^n$, for inner stages $i = 1  \text{ to }  s$:
\bit
\item First update the solution $U_E^{(i)}$ for the explicit part
\begin{equation}
  \label{a7}
  U_E^{(i)} = U^n + \Delta t\sum^{i-1}_{j=1}\tilde{a}_{ij}\mathcal{H}(U_E^{(j)},U_I^{(j)},\bB_E^{(j)}), \quad \bB_E^{(i)} = \nabla \times \bA_E^{(i)}.
\end{equation}
\item Update the known values for the implicit part $U_*^{(i)}$, where
  \begin{equation}
  \label{a8}
  U_*^{(i)} = U^n + \Delta t\sum^{i-1}_{j=1}a_{ij}\mathcal{H}(U_E^{(j)},U_I^{(j)},\bB_E^{(j)}),
  \end{equation}
and then solve
   \begin{equation}
   \label{a9}
   U_I^{(i)} = U_*^{(i)} + \Delta ta_{ii}\mathcal{H}(U_E^{(i)},U_I^{(i)},\bB_E^{(i)}).
   \end{equation}
\item Finally, compute the solutions $U^{n+1}$ and $\bB^{n+1}$ at time level $t^{n+1}$ from
\begin{equation}
\label{a10}
U^{n+1}  = U^n + \Delta t \sum_{i=1}^s b_i \mathcal{H}(U_E^{(i)},U_I^{(i)},\bB_E^{(i)}), \quad
\bB^{n+1} = \nabla \times \bA^{n+1}.
\end{equation}
\eit
The first order scheme \eqref{S3_A1} is the same as applying the following Butcher table
to \eqref{S2_E24}:
\begin{equation*}\label{Afirst}
\begin{array}{c|c}
0 & 0 \\
\hline
&1
\end{array} \qquad\qquad
\begin{array}{c|c}
1 & 1     \\
\hline
&  1
\end{array},
\end{equation*}
namely $U_E = U^n$ and $U_I = U^{n+1}$.

\subsection{High order finite difference WENO scheme} Below we will describe our spatial discretization strategies which incorporate a WENO mechanism in order to capture shocks in the compressible regime and produce a high order incompressible solver for the flow in the zero Mach limit. In particular, we propose to apply a fifth order characteristic-wise WENO procedure to $\mathcal{F}_{E}^1$ of the ideal MHD system so that in the compressible regime numerical oscillations could be well controlled, and to apply a component-wise WENO procedure for the flux functions of $\mathcal{F}_{E}^2$ and $\mathcal{F}_{SI}$. As for $\mathcal{G}_{E}$, which is a Hamiltonian, a WENO procedure described in \cite{jiang2000weighted} will be applied.
\bit
\item Component-wise WENO $\nabla_{W}$ for $\nabla \cdot \mathcal{F}_{E}^2$ and $\nabla \cdot \mathcal{F}_{SI}$:\\
Let $\bv = (\rho, \mathbf{q}, \bB, E)^T$ and $\mathbf{f} = \mathbf{f}(\bv)$ be the unknown variables and flux of the ideal MHD system \eqref{S2_E1}. For simplicity, we consider the 1D case as an example, and we take a uniform mesh with $N+1$ grid points: $a = x_0 < x_1 < \cdots < x_N = b$. A conservative finite difference WENO scheme for system \eqref{S2_E1} can be written in the following flux-difference form:
\begin{equation*}
    (\mathbf{v}_i)_t = \frac{1}{\Delta x}(\hat{\mathbf{f}}_{i+\frac{1}{2}} - \hat{\mathbf{f}}_{i-\frac{1}{2}}),
\end{equation*}
where $\hat{\mathbf{f}}_{i \pm \frac{1}{2}}$ are the numerical fluxes, which can be reconstructed by the following procedures:
\begin{enumerate}
	\item Compute the physical flux at each grid point: $\mathbf{f}_i = \mathbf{f}(\bv_i)$.\\
\item Perform a Lax–Friedrichs flux vector splitting for each component of the physical variables to get
\begin{equation}
	\label{LF}
    \mathbf{f}^{\pm}_i = \frac{1}{2}(\mathbf{f}_i \pm \alpha \mathbf{v}_i),
\end{equation}
with $\alpha = \mathop{\max}\limits_{0 \leq k \leq N } (\vert \bu_k \vert + \hat{c}_f^{(k)})$, where
\begin{equation}
    \label{3.21}
    \left\{
    \begin{aligned}
    &\hat{a}^{(k)} := {\rm min}\left(\frac{1}{\eps},1 \right) \sqrt{\frac{\gamma p_k}{\rho_k}}, \quad
    \hat{c}_a^{(k)} := \sqrt{\frac{{(\bB_k \cdot \bn)}^2}{\rho_k}}, \\
    &\hat{c}_{f,s}^{(k)} := {\left\{ \frac{1}{2} \left[ \left(\hat{a}^{(k)} \right)^2 + \frac{{\Vert \bB_k \Vert}^2}{\rho_k} \pm \sqrt{{\left(\left(\hat{a}^{(k)} \right)^2 + \frac{{\Vert \bB_k \Vert}^2}{\rho_k}\right)}^2 - \left(2{\hat{a}^{(k)}} {\hat{c}^{(k)}_a} \right)^2 } \right] \right\}}^{\frac{1}{2}}.
    \end{aligned}\right.
\end{equation}
\item Perform a finite difference WENO reconstruction to obtain upwind and downwind numerical fluxes:
\begin{equation*}
    \hat{\mathbf{f}}^{+}_{i+\frac{1}{2}} = \Phi_{\rm WENO5} (\mathbf{f}^{+}_{i-2},\mathbf{f}^{+}_{i-1},\mathbf{f}^{+}_{i},\mathbf{f}^{+}_{i+1},\mathbf{f}^{+}_{i+2}), \quad \hat{\mathbf{f}}^{-}_{i+\frac{1}{2}} = \Phi_{\rm WENO5}(\mathbf{f}^{-}_{i+3},\mathbf{f}^{-}_{i+2},\mathbf{\mathbf{f}}^{-}_{i+1},\mathbf{f}^{-}_{i},\mathbf{f}^{-}_{i-1}).
\end{equation*}
Here $\Phi_{\rm WENO5}$ denotes a 5th-order WENO reconstruction \cite{jiang2000weighted}, while other orders can also be used. Then we set
\begin{equation*}
    \hat{\mathbf{f}}_{i+\frac{1}{2}} = \hat{\mathbf{f}}^{+}_{i+\frac{1}{2}} +\hat{\mathbf{f}}^{-}_{i+\frac{1}{2}}.
\end{equation*}
\end{enumerate}

\item Characteristic-wise WENO $\nabla_{CW}$ for $\nabla \cdot \mathcal{F}_{E}^1$:\\
The Jacobian matrix for the flux function  $\partial \mathbf{f} / \partial \mathbf{v}$, has a spectral decomposition of the form $\partial \mathbf{f} / \partial \mathbf{v} = RDL$ \cite{christlieb2014finite}, where $D$ is a diagonal matrix of real eigenvalues, $R$ is the matrix of right eigenvectors, and $L = R^{-1}$ is the matrix of left eigenvectors. Then, for a characteristic-wise WENO reconstruction $\nabla_{CW}$, we have the following procedures:
\begin{enumerate}
\item Compute the physical flux at each grid point: $\mathbf{f}_i = \mathbf{f}(\bv_i)$.
\item Compute the average state $\bv_{i+\frac{1}{2}}$ by using an arithmetic mean:
\begin{equation*}
    \bv_{i+\frac{1}{2}} = \frac{1}{2}(\bv_i + \bv_{i+1}).
\end{equation*}
\item Compute the right and left eigenvectors of the flux Jacobian matrix $\partial \mathbf{f} / \partial \mathbf{v}$ by taking $\bv=\bv_{i+\frac{1}{2}}$ at $x = x_{i+\frac{1}{2}}$:
\begin{equation*}
    R_{i+\frac{1}{2}} = R(\bv_{i+\frac{1}{2}}), \quad L_{i+\frac{1}{2}} = L(\bv_{i+\frac{1}{2}}).
    \label{b1}
\end{equation*}
\item Project the solution and physical flux into the characteristic space:
\begin{equation*}
    \mathbf{w}_j = L_{i+\frac{1}{2}} \bv_j, \quad \mathbf{g}_j = L_{i+\frac{1}{2}} \mathbf{f}_j,
\end{equation*}
for all $j$ in the numerical stencil associated with $x = x_{i+\frac{1}{2}}$. In the case of a 5th-order finite difference WENO scheme: $j = i-2,i-1,i,i+1,i+2,i+3$.\\
\item Perform a Lax–Friedrichs flux vector splitting for each component of the characteristic variables to compute
\begin{equation*}
    \mathbf{g}^{\pm}_j = \frac{1}{2}(\mathbf{g}_j \pm \alpha \mathbf{w}_j),
\end{equation*}
where $\alpha$ has the same expression in the component-wise WENO \eqref{LF}.
\item Perform a WENO reconstruction $\Phi_{\rm WENO5}$ on each component of $\mathbf{g}^{\pm}_j$ to obtain their corresponding numerical fluxes, and then set
\begin{equation*}
    \hat{\mathbf{g}}_{i+\frac{1}{2}} = \hat{\mathbf{g}}^{+}_{i+\frac{1}{2}} +\hat{\mathbf{g}}^{-}_{i+\frac{1}{2}}.
\end{equation*}
\item Project the numerical flux $\hat{\mathbf{g}}_{i+\frac{1}{2}}$ back to the physical space:
\begin{equation*}
    \hat{\mathbf{f}}_{i+\frac{1}{2}} = R_{i+\frac{1}{2}} \hat{\mathbf{g}}_{i+\frac{1}{2}}.
\end{equation*}
\end{enumerate}
However, when the sonic Mach number $\eps$ is small, the fast eigenvalue $c_f$ will make such a spectral characteristic decomposition to be unstable. Since a characteristic decomposition is mainly required in the compressible regime where $\eps=\mathcal{O}(1)$, while it does not make any significant differences for small Mach numbers; in this case we directly use the eigenvectors in the expression of \eqref{S2_E1} by taking $\eps=1$, which works well from our numerical results.

\item A Hamiltonian WENO reconstruction WENO-HJ $(\cdot)_{HJ}$ for $\mathcal{G}_{E}$:\\
We take a 1D Hamilton–Jacobi equation as an example
\begin{equation}
    v_t + \mathcal{HT} (t,x,v,v_x) = 0,
    \label{b2}
\end{equation}
where $\mathcal{HT}$ is the Hamiltonian. As described in \cite{jiang2000weighted}, a semi-discrete form for \eqref{b2} is
\begin{equation*}
    v_{t,i} = -\widehat{\mathcal{HT}} (t,x_i,v_i,v^-_{x,i},v^+_{x,i}),
\end{equation*}
where $\widehat{\mathcal{HT}}$ is a numerical Hamiltonian which approximates \eqref{b2}, and $v^-_{x,i},v^+_{x,i}$ are upwind and downwind approximations of $v_x$ at $x = x_i$. A Lax-Friedrichs Hamiltonian from \cite{jiang2000weighted} is given as follows
\begin{equation}
    \widehat{\mathcal{HT}}\left(t,x,v,u^-,u^+\right) = \mathcal{HT}\left(t,x,v,\frac{u^- + u^+}{2} \right) - \beta \left(\frac{u^+ -u^-}{2} \right), 
    \label{nH}
\end{equation}
where
\begin{equation*}
    \beta = \mathop{\max}\limits_{u \in I (u^-,u^+)} \vert \mathcal{HT}_u (t,x,v,u) \vert,
\end{equation*}
and $I(u^-,u^+)$ is the interval between $u^-$ and $u^+$.
The values of $v^+_{x,i}$ and $v^-_{x,i}$ are obtained from
\begin{equation*}
    \begin{aligned}
    v^+_{x,i} &= \Phi_{\rm WENO5} \left(\frac{\Delta^+ v_{i+2}}{\Delta x},\frac{\Delta^+ v_{i+1}}{\Delta x},\frac{\Delta^+ v_{i}}{\Delta x},\frac{\Delta^+ v_{i-1}}{\Delta x},\frac{\Delta^+ v_{i-2}}{\Delta x}\right),\\
    v^-_{x,i} &= \Phi_{\rm WENO5} \left(\frac{\Delta^+ v_{i-3}}{\Delta x},\frac{\Delta^+ v_{i-2}}{\Delta x},\frac{\Delta^+ v_{i-1}}{\Delta x},\frac{\Delta^+ v_{i}}{\Delta x},\frac{\Delta^+ v_{i+1}}{\Delta x}\right),
    \end{aligned}
\end{equation*}
with $\Delta^+v_j = v_{j+1} - v_j$. Such a Hamiltonian WENO reconstruction helps us to control unphysical oscillations both in $v_x$ and $v$. For $\mathcal{G}_{E}$ in 2D, the Hamiltonian is
\begin{equation*}
    \mathcal{HT} \left(t,x,y,\frac{\partial A_z}{\partial x},\frac{\partial A_z}{\partial y}\right) = u(t,x,y) \frac{\partial A_z}{\partial x} + v(t,x,y) \frac{\partial A_z}{\partial y},
\end{equation*} 
and the numerical Hamiltonian \eqref{nH} can be applied along each direction for $u(t,x,y) \frac{\partial A_z}{\partial x}$ and $v(t,x,y) \frac{\partial A_3}{\partial y}$, respectively. Here $A_z$ is the third component of $\bA$.
\eit

\subsection{Flowchart of a high order SI IMEX-RK finite difference WENO scheme} With the above space and time discretizations, we now summarize our high order scheme as follows:
\begin{itemize}
\item \textbf{Step 1.} Starting from $U^n$ at time $t^n$, for an intermediate stage $i$ ($1 \le i \le s$), we first compute $U^{(i)}_E$ from \eqref{a7}
\begin{equation*}
\rho^{(i)}_E = \rho^n - \Delta t \sum^{i-1}_{j = 1} \Tilde{a}_{ij} \nabla_{CW} \cdot \mathbf{q}^{(j)}_E,
\end{equation*}
\begin{equation*}
\mathbf{q}^{(i)}_E = \mathbf{q}^n - \Delta t \sum^{i-1}_{j = 1} \Tilde{a}_{ij} \left(\nabla_{CW} \cdot \left(\frac{\mathbf{q}^{(j)}_E \otimes \mathbf{q}^{(j)}_E}{\rho^{(j)}_E} - \mathbf{B}^{(j)}_E \otimes \mathbf{B}^{(j)}_E +\left(\frac{1}{2} {\Vert \mathbf{B}^{(j)}_E \Vert}^2 + p^{(j)}_E \right) \mathbf{I} \right) + (1-\eps^2)\nabla_W p^{(j)}_{I,2} \right),
\end{equation*}
\begin{equation*}
\mathbf{A}^{(i)}_E = \mathbf{A}^n - \Delta t \sum^{i-1}_{j = 1} \Tilde{a}_{ij} \left( \left(\nabla \times \bA^{(j)}_E \right) \times \frac{\mathbf{q}^{(j)}_E}{\rho^{(j)}_E} \right)_{HJ},
\end{equation*}
\begin{equation*}
E^{(i)}_E = E^n - \Delta t \sum^{i-1}_{j = 1} \Tilde{a}_{ij} \left(\nabla_W \cdot \left(\eps^2 \left(\frac{1}{2}{\Vert \mathbf{B}^{(j)}_E \Vert}^2 \frac{\mathbf{q}^{(j)}_E}{\rho^{(j)}_E} - \left(\frac{\mathbf{q}^{(j)}_E}{\rho^{(j)}_E} \cdot \mathbf{B}^{(j)}_E \right)\mathbf{B}^{(j)}_E\right) \right) + \nabla_W \cdot \left(\bar{H}^{(j)} \mathbf{q}^{(j)}_I \right) \right),
\end{equation*}
with $\bar{H}^{(j)} = (E_E^{(j)}+p_E^{(j)}) / \rho_I^{(j)}$. Then we obtain 
\begin{equation*}
    \bB_E^{(i)} = \nabla \times \bA_E^{(i)},
\end{equation*}
with a 4th-order central difference discretization.

\item \textbf{Step 2.} We compute $U^{(i)}_*$ in \eqref{a8}
\begin{equation*}
\rho^{(i)}_* = \rho^n - \Delta t \sum^{i-1}_{j = 1} a_{ij} \nabla_{CW} \cdot \mathbf{q}^{(j)}_E,
\end{equation*}
\begin{equation*}
\mathbf{q}^{(i)}_* = \mathbf{q}^n - \Delta t \sum^{i-1}_{j = 1} a_{ij} \left(\nabla_{CW} \cdot \left(\frac{\mathbf{q}^{(j)}_E \otimes \mathbf{q}^{(j)}_E}{\rho^{(j)}_E} - \mathbf{B}^{(j)}_E \otimes \mathbf{B}^{(j)}_E +\left(\frac{1}{2} {\Vert \mathbf{B}^{(j)}_E \Vert}^2 + p^{(j)}_E \right) \mathbf{I}\right) + (1-\eps^2) \nabla_W p^{(j)}_{I,2}\right),
\end{equation*}
\begin{equation*}
E^{(i)}_* = E^n - \Delta t \sum^{i-1}_{j = 1} a_{ij} \left(\nabla_W \cdot \left(\eps^2 \left(\frac{1}{2}{\Vert \mathbf{B}^{(j)}_E \Vert}^2 \frac{\mathbf{q}^{(j)}_E}{\rho^{(j)}_E} - \left(\frac{\mathbf{q}^{(j)}_E}{\rho^{(j)}_E} \cdot \mathbf{B}^{(j)}_E \right)\mathbf{B}^{(j)}_E\right) \right) + \nabla_W \cdot \left(\bar{H}^{(j)} \mathbf{q}^{(j)}_I \right) \right).
\end{equation*}

\item \textbf{Step 3.} Solve $U_I^{(i)}$ from \eqref{a9}.
\begin{enumerate}
\item In components, $U_I^{(i)}$ satisfies
\begin{subequations}
\label{ccc0}
\begin{equation}
\rho^{(i)}_I = \rho^{(i)}_* - \Delta t  a_{ii} \nabla_{CW} \cdot \mathbf{q}^{(i)}_E,
\label{c1_1}
\end{equation}
\begin{equation}
\mathbf{q}^{(i)}_I = \mathbf{q}^{(i)}_{**} - \Delta t  a_{ii} (\frac{1-\eps^2}{\eps^2} \nabla p^{(i)}_I),
\label{c1_2}
\end{equation}
\begin{equation}
E^{(i)}_I = E^{(i)}_{**} - \Delta t a_{ii} \nabla \cdot (\bar{H}^{(i)} \mathbf{q}^{(i)}_I),
\label{c1_4}
\end{equation}
\label{c1}
\end{subequations}
where
\begin{equation*}
    \mathbf{q}^{(i)}_{**} = \mathbf{q}^{(i)}_* - \Delta t  a_{ii} \nabla_{CW} \cdot \left(\frac{\mathbf{q}^{(i)}_E \otimes \mathbf{q}^{(i)}_E}{\rho^{(i)}_E} - \mathbf{B}^{(i)}_E \otimes \mathbf{B}^{(i)}_E +\left(\frac{1}{2} {\Vert \mathbf{B}^{(i)}_E \Vert}^2 + p^{(i)}_E\right) \mathbf{I} \right),
\end{equation*}
\begin{equation*}
    E^{(i)}_{**} = E^{(i)}_* - \Delta t a_{ii} \nabla_W \cdot \left(\eps^2 \left(\frac{1}{2}{\Vert \mathbf{B}^{(i)}_E \Vert}^2 \frac{\mathbf{q}^{(i)}_E}{\rho^{(i)}_E} - \left(\frac{\mathbf{q}^{(i)}_E}{\rho^{(i)}_E} \cdot \mathbf{B}^{(i)}_E \right)\mathbf{B}^{(i)}_E \right) \right).
\end{equation*}
\item We can obtain $\rho_I^{(i)}$ from \eqref{c1_1} directly. To solve \eqref{c1_2} and \eqref{c1_4}, we substitute $\mathbf{q}^{(i)}_I$ of \eqref{c1_2} into \eqref{c1_4}, and obtain
\begin{equation}
E^{(i)}_I = E^{\circ}_I + \left(1-\eps^2 \right)\Delta t^2 a^2_{ii} \nabla \cdot \left(\bar{H}^{(i)} \left(\frac{\nabla p^{(i)}_{I}}{\eps^2}\right)  \right),
\label{c3}
\end{equation}
with $E_I^{(i)}$ following the EOS 
$E_I^{(i)} = p_I^{(i)} / (\gamma -1) + \eps^2 ({\Vert\mathbf{q}_E^{(i)} \Vert}^2 / (2 \rho_E^{(i)}) + {\Vert\mathbf{B}_E^{(i)} \Vert}^2/2)$ and $E^{\circ}_I = E^{(i)}_{**} - \Delta t a_{ii} \nabla_W \cdot (\bar{H}^{(i)} \mathbf{q}^{(i)}_{**})$. By substituting 
\begin{equation}
    p^{(i)}_I = \bar{p}^{(i)}_E + \eps^2 p^{(i)}_{I,2} 
    \label{p12}
\end{equation}
into \eqref{c3}, where $\bar{p}^{(i)}_E = \int_{\Omega} p_E^{(i)} d \bx / \vert \Omega \vert$, we solve
\begin{equation}
\frac{\eps^2 }{\gamma -1 } p^{(i)}_{I,2} - \left(1-\eps^2 \right) \Delta t^2 a^2_{ii} \nabla \cdot \left(\bar{H}^{(i)} \left(\nabla p^{(i)}_{I,2}\right) \right) = E^{\circ \circ}_I,
\label{c4}
\end{equation}
with $E^{\circ \circ}_I = E^{\circ}_I - \bar{p}^{(i)}_E / (\gamma -1 ) - \eps^2  ( {\Vert \mathbf{q}^{(i)}_E \Vert}^2 / (2 \rho^{(i)}_E) + {\Vert \mathbf{B}^{(i)}_E \Vert}^2 /2 )$. Notice that in the process of substitution to obtain \eqref{c3} or \eqref{c4}, the gradient and the divergence operators are kept to be continuous, obtaining the second order operator $\nabla \cdot (\bar{H}^{(i)} \nabla p^{(i)}_{I,2})$.  The second-order spatial derivative is then discretized by a compact discretization as proposed in \cite{boscarino2019high}.
\item With $p^{(i)}_{I,2}$ solved from \eqref{c4}, we can update $\mathbf{q}^{(i)}_{I}$ from
\begin{equation*}
    \mathbf{q}^{(i)}_I = \mathbf{q}^{(i)}_{**} - \Delta t  a_{ii} \left( \left(1-\eps^2 \right) \nabla_W p^{(i)}_{I,2} \right),
\end{equation*}
and successively update $E^{(i)}_{I}$ from
\begin{equation*}
    E^{(i)}_I = E^{(i)}_{**} - \Delta t a_{ii} \nabla_W \cdot \left(\bar{H}^{(i)} \mathbf{q}^{(i)}_I \right).
\end{equation*}
\end{enumerate}

\item \textbf{Step 4.} Finally, we update the numerical solution $U^{n+1} = U^{(s)}_I$, and $\bB^{n+1} = \nabla \times \bA^{n+1}$.
\end{itemize}


\section{Asymptotic preserving (AP) and Asymptotically Accurate (AA) properties}
\label{sec4}
\setcounter{equation}{0}
\setcounter{figure}{0}
\setcounter{table}{0}
\subsection{AP property}
In this section, we will prove the AP property for the first order SI IMEX scheme \eqref{S3_A1}, and the AA property for the high order SI IMEX-RK scheme \eqref{a7}-\eqref{a10}. We focus on time discretizations while keeping the space continuous when we discuss the AP or AA property. First, we have the following theorem:
\begin{thm}
	The first order SI IMEX scheme \eqref{S3_A1} with space continuous is AP, in the sense that, with well-prepared initial conditions \eqref{S2_E18}, at the leading order asymptotic expansions, the scheme \eqref{S3_A1} is a consistent approximation of the incompressible MHD equations \eqref{S2_E26}.
\end{thm}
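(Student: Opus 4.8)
The plan is to insert the asymptotic ansatz \eqref{S2_E12} (together with the reduced expansions \eqref{ep_u} and \eqref{ep_p}) into each discrete equation of \eqref{S3_A1} and to match powers of $\eps$, arguing by induction on the time level $n$. The induction hypothesis is that at $t^n$ the numerical solution carries the well-prepared structure of \eqref{S2_E18}: $p_0^n$ is spatially constant, there is no $\mathcal{O}(\eps)$ term in $p^n$, and $\nabla\cdot\bu_0^n=\nabla\cdot\bB_0^n=0$. The base case $n=0$ is exactly the well-prepared initial data, so the only task is to show these properties propagate to $t^{n+1}$ while simultaneously recovering \eqref{S2_E26} at the leading order. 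Since the theorem only asserts a \emph{consistent approximation}, no stability or convergence estimate is needed.

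First I would treat the momentum update \eqref{S3_E2}. As $\tfrac{\mathbf{q}^{n+1}-\mathbf{q}^n}{\Delta t}$ and the explicit flux divergence are $\mathcal{O}(1)$, balancing the singular part of $\tfrac{1-\eps^2}{\eps^2}\nabla p^{n+1}$, namely $\tfrac1{\eps^2}\nabla p_0^{n+1}+\tfrac1\eps\nabla p_1^{n+1}+\cdots$, at orders $\mathcal{O}(\eps^{-2})$ and $\mathcal{O}(\eps^{-1})$ forces $\nabla p_0^{n+1}=0$ and $\nabla p_1^{n+1}=0$, so $p_0^{n+1}$ and $p_1^{n+1}$ are spatially constant.

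The decisive step is the energy update \eqref{S3_E4}. Its $\eps^2$-flux vanishes at leading order, and using $E_0=p_0/(\gamma-1)$, $H^n=(E^n+p^n)/\rho^{n+1}$ together with the spatial constancy of $p_0^n$, the $\mathcal{O}(\eps^0)$ balance collapses to the discrete relation $\tfrac{p_0^{n+1}-p_0^n}{\Delta t}+\gamma p_0^n\,\nabla\cdot\bu_0^{n+1}=0$, the exact analog of \eqref{S2_E16}. Integrating over $\Omega$ and invoking the periodic or $\bu\cdot\bn=0$ boundary conditions kills $\int_\Omega\nabla\cdot\bu_0^{n+1}\,d\bx$; since $p_0^{n+1}$ and $p_0^n$ are constant in space, this yields $p_0^{n+1}=p_0^n=\mathrm{Const}$, and feeding it back gives $\nabla\cdot\bu_0^{n+1}=0$. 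I expect this to be the main obstacle, because it is the only place where the divergence-free constraint is generated rather than assumed: one must verify that the $p^n\mathbf{I}$ term in the momentum flux drops at leading order (its gradient is $\nabla p_0^n=0$), that $H_0^n$ combines with $E_0=p_0/(\gamma-1)$ to produce precisely the factor $\gamma p_0^n$, and that the mixing of time levels inside $H^n$ (which contains $\rho^{n+1}$, already fixed in Step 1 of the flow chart) does not spoil the cancellation.

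The remaining equations are then routine consistency checks that also close the induction. The continuity update \eqref{S3_E1} at $\mathcal{O}(\eps^0)$, using $\nabla\cdot\bu_0^n=0$, reduces to $\tfrac{\rho_0^{n+1}-\rho_0^n}{\Delta t}+\bu_0^n\cdot\nabla\rho_0^n=0$, a consistent discretization of $\mathrm{d}\rho_0/\mathrm{d}t=0$; the $\mathcal{O}(\eps^0)$ momentum balance, with $\nabla p^{n+1}=\eps^2\nabla p_2^{n+1}+\cdots$ and $p_0^n$ constant, reproduces the incompressible momentum equation of \eqref{S2_E26} with pressure $p_2^{n+1}$; the vector-potential update \eqref{S3_E3} gives $\tfrac{\bA_0^{n+1}-\bA_0^n}{\Delta t}+(\nabla\times\bA_0^n)\times\bu_0^n=\mathbf{0}$; and \eqref{S3_E5} gives $\bB_0^{n+1}=\nabla\times\bA_0^{n+1}$, hence $\nabla\cdot\bB_0^{n+1}=0$. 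Collecting these leading-order relations reproduces exactly \eqref{S2_E26} and simultaneously reestablishes the well-prepared structure at $t^{n+1}$, completing the induction.
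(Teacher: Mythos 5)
Your proposal is correct and follows essentially the same route as the paper: balance the singular $\eps^{-2}$ term in the momentum update to get $\nabla p_0^{n+1}=0$, combine the energy update with the EOS and $H^n=(E^n+p^n)/\rho^{n+1}$ to obtain the discrete relation $\frac{p_0^{n+1}-p_0^n}{\Delta t}+\gamma p_0^n\,\nabla\cdot\bu_0^{n+1}=0$, integrate over $\Omega$ to conclude $p_0^{n+1}=p_0^n=\mathrm{Const}$ and $\nabla\cdot\bu_0^{n+1}=0$, and check the remaining updates are consistent with \eqref{S2_E26}. The only cosmetic differences are that you phrase the one-step argument as an explicit induction on $n$ and additionally note the $\mathcal{O}(\eps^{-1})$ balance for $p_1$, neither of which changes the substance.
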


\begin{proof}
We assume the following expansions of the solutions at time $t^n$:
\begin{equation}
    \left\{
    \begin{aligned}
    &p^n(\mathbf{x}) = p^n_0 +  \eps^2 p^n_2(\mathbf{x}), \quad \rho^n(\mathbf{x}) = \rho^n_0(\mathbf{x}) + \mathcal{O}(\eps),\\
    &\mathbf{u}^n(\mathbf{x}) = \mathbf{u}^n_0(\mathbf{x}) + \mathcal{O}(\eps^2), \quad
    \mathbf{A}^n(\mathbf{x}) = \mathbf{A}^n_0(\mathbf{x}) + \mathcal{O}(\eps), \\
    &\mathbf{B}^n(\mathbf{x}) = \mathbf{B}^n_0(\mathbf{x}) + \mathcal{O}(\eps), &
    \end{aligned}\right.
    \label{4.1}
\end{equation}
with periodic or no-slip boundary conditions, and well-prepared conditions $p_0^n = (\gamma - 1)E^n_0 = {\rm Const}$, $\nabla \cdot \mathbf{u}^n_0(\mathbf{x}) = 0$ and $\nabla \cdot \bB^n(\bx) = 0$.
No matter what value $\bA^{n+1}$ takes, we can obtain $\nabla \cdot \bB^{n+1} = \nabla \cdot (\nabla \times \bA^{n+1}) = 0$ from \eqref{S3_E5}. We plug \eqref{4.1} into the semi-discrete scheme \eqref{S3_A1} and EOS \eqref{S3_E7}. For the leading order $\mathcal{O}(\eps^{-2})$, we obtain
\begin{equation*}
    \nabla p_0^{n+1} = 0,
\end{equation*}
i.e. $p_0^{n+1}$ is constant in space. Equating to zero the $\mathcal{O}(\eps^{0})$ terms, we have the following equations:
\begin{subequations}
    \label{4.3}
\begin{equation}
    \frac{\rho_0^{n+1} - \rho_0^n}{\Delta t} + \nabla \cdot (\rho_0^n \bu_0^n) = 0,
    \label{4.3a}
\end{equation}
\begin{equation}
    \frac{\rho_0^{n+1} \mathbf{u}_0^{n+1} - \rho_0^n \mathbf{u}_0^n}{\Delta t} + \nabla \cdot \left(\rho_0^n \bu_0^n \otimes \bu_0^n- \mathbf{B}_0^n \otimes \mathbf{B}_0^n +\frac{1}{2} {\Vert \mathbf{B}_0^n \Vert}^2 \mathbf{I} \right) +  \nabla p^{n+1}_2 = \mathbf{0},
    \label{4.3b}
\end{equation}
\begin{equation}
    \frac{\mathbf{A}_0^{n+1} - \mathbf{A}_0^n}{\Delta t} + (\nabla \times \bA_0^{n}) \times \bu_0^n = \mathbf{0},
    \label{4.3c}
\end{equation}
\begin{equation}
    \frac{E_0^{n+1} - E_0^n}{\Delta t} + \nabla \cdot \left(\bar{H}_0^n \left(\rho_0^{n+1} \bu_0^{n+1} \right) \right) = 0,
    \label{4.3d}
\end{equation}
\begin{equation}
    \bB_0^{n+1} - \nabla \times \bA_0^{n+1} = \mathbf{0},
    \label{4.3e}
\end{equation}
\begin{equation}
    E_0^{n+1} = \frac{p_0^{n+1}}{\gamma -1},
    \label{4.3f}
\end{equation}
\end{subequations}
with
\begin{equation}
    \bar{H}_0^n = \frac{p_0^n+ E_0^n}{\rho_0^{n+1}} = \frac{\gamma}{\gamma -1 } \frac{p_0^n}{\rho_0^{n+1}}.
    \label{4.4}
\end{equation}
Due to \eqref{4.3f} and \eqref{4.4}, it is easy to check that \eqref{4.3d} is equivalent to
\begin{equation}
    \frac{p_0^{n+1} - p_0^n}{\Delta t} + \gamma p_0^n \nabla \cdot \bu_0^{n+1} = 0.
    \label{4.5}
\end{equation}
 Taking into account a spatial domain $\Omega$ with periodic or no-slip boundary conditions and integrating \eqref{4.5} over $\Omega$, we further get $\int_{\Omega} \nabla \cdot \bu_0^{n+1} d \bx = \int_{\partial \Omega} \bu_0^{n+1} \cdot \bn d s = 0$, where $\bn$ is the unit outward normal vector along $\partial \Omega$. This implies $p_0^{n+1} = p^n_0 = {\rm Const}$. A direct result $\nabla \cdot \bu_0^{n+1} = 0$ can be obtained through combing \eqref{4.5} with $p_0^{n+1} = p^n_0$. The equations \eqref{4.3a}-\eqref{4.3c} and \eqref{4.3e} correspond to consistent discretizations for other equations in \eqref{S2_E26}, so that the AP property is obtained.
\end{proof}

\subsection{AA property}Similarly to the AP property, here we focus on the
AA analysis on time discretizations, while keeping space continuous. Then we have the following theorem:
\begin{thm}
	We consider a high order SI IMEX-RK scheme \eqref{a7}-\eqref{a10} of order r applied to system \eqref{S2_E24} on a bounded domain $\Omega \subset \mathbb{R}$ with periodic or compact support boundary conditions, and assume that the implicit part of the SI IMEX-RK scheme is SA and the initial conditions $(\rho^0(\bx),\rho^0(\bx) \bu^0(\bx), p^0,$ $ \bA^0(\bx), \bB^0(\bx))^T$ are well-prepared, namely in the form of \eqref{4.1}. If we denote $(\rho^1(\bx;\eps),\rho^1(\bx;\eps) \bu^1(\bx;\eps),$ $ p^1(\bx;\eps), \bA^1(\bx;\eps), \bB^1(\bx;\eps))^T$ as the numerical solution after one-time step, then, with $p_*$ being a constant, we have:
 \begin{equation}
    \label{4.6}
     \lim_{\eps \to 0} p^1(\bx;\eps) = p_*, \quad \lim_{\eps \to 0} \nabla \cdot \bu^1(\bx;\eps) = 0, \quad \nabla \cdot \bB^1(\bx;\eps) = 0.
 \end{equation}
Furthermore, let $\bV_{inc}(\bx;t) = (\rho_{inc}(\bx;t),\rho_{inc}(\bx;t) \bu_{inc}(\bx;t), p_{inc}(\bx;t), \bA_{inc}(\bx;t), \bB_{inc}(\bx;t))^T$ be the exact solution of the incompressible MHD equations \eqref{S2_E26} with the same initial data, one has the following one-step error estimate
\begin{equation}
    \lim_{\eps \to 0} \bV^1(\bx;\eps) = \bV_{inc}(\bx;\Delta t) + \mathcal{O}(\Delta t^{r+1}),
    \label{4.7}
\end{equation}
i.e. the scheme is AA.
\end{thm}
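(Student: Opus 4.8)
The plan is to reduce the AA statement to a ``commuting limit'' principle: to show that as $\eps\to 0$ the scheme \eqref{a7}--\eqref{a10} applied to the compressible system \eqref{S2_E24} degenerates \emph{exactly} into the same double-Butcher IMEX-RK scheme applied to the incompressible limit system \eqref{S2_E26}. Once this is in place, the limiting scheme inherits order $r$, and the one-step error \eqref{4.7} follows from the usual local truncation error of an order-$r$ method, since the well-prepared data lie on the incompressible manifold. I would first dispatch the divergence-free claim in \eqref{4.6}: every stage sets $\bB_E^{(i)} = \nabla\times\bA_E^{(i)}$ and the final update is $\bB^{1} = \nabla\times\bA^{1}$, so $\nabla\cdot\bB^{1} = \nabla\cdot(\nabla\times\bA^{1}) = 0$ for every $\eps$, independently of everything else.

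Next I would prove the pressure and velocity limits by induction on the stage index, propagating the first-order argument of Theorem 4.1 through the RK stages. The inductive hypothesis is that for all previous stages $j<i$ the well-prepared structure \eqref{4.1} is retained, i.e. $p_E^{(j)} = p_* + \mathcal{O}(\eps^2)$ with $p_*$ constant in space. For the implicit stage $i$ I would pass to the limit in the elliptic pressure equation \eqref{c4}: the $\tfrac{\eps^2}{\gamma-1}p_{I,2}^{(i)}$ term drops, leaving a Poisson-type problem $-\Delta t^2 a_{ii}^2\,\nabla\cdot(\bar H_0^{(i)}\nabla p_{I,2,0}^{(i)}) = E_{I,0}^{\circ\circ}$ that determines a bounded $p_{I,2}^{(i)}$, so $p_I^{(i)} = \bar p_E^{(i)} + \eps^2 p_{I,2}^{(i)} \to \bar p_E^{(i)}$. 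The SA hypothesis $b^T = e_s^T A$ then identifies $\bV^{1}=U_I^{(s)}$ with the last internal stage. Applying the limit analysis to this final stage and mimicking \eqref{4.5}, the energy equation with the EOS $E_0=p_0/(\gamma-1)$ gives a discrete relation of the form $(p_0^1-p_0^n)/\Delta t + \gamma p_0^n\,\nabla\cdot\bu_0^1 = 0$; integrating over $\Omega$ and using the periodic or no-slip boundary conditions kills the right-hand side, forcing the mean leading-order pressure to remain equal to the constant initial value and hence $\nabla\cdot\bu_0^1=0$. This yields \eqref{4.6} with $p_*=p_0^0$.

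For the error estimate \eqref{4.7} I would read off from the previous step that the limiting stage values coincide with those produced by the same tableau applied to \eqref{S2_E26}: the explicit flux divergences converge to their incompressible counterparts, the stiff term $(1-\eps^2)\nabla p_{I,2}$ tends to $\nabla p_2$, and the limiting elliptic equation \eqref{c4} becomes precisely the pressure-Poisson projection enforcing $\nabla\cdot\bu_0^{(i)}=0$ that characterizes the implicit stage of the incompressible scheme. Thus the $\eps\to 0$ limit of our scheme is an order-$r$ IMEX-RK discretization of \eqref{S2_E26}; feeding in the exact incompressible solution $\bV_{inc}$ produces a residual of size $\mathcal{O}(\Delta t^{r+1})$, which is \eqref{4.7}.

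The hard part will be making the ``limiting scheme is order $r$'' step rigorous, because the incompressible limit \eqref{S2_E26} is not an ODE but a constrained, index-two DAE-like system in which $p_2$ acts as a Lagrange multiplier enforcing $\nabla\cdot\bu_0=0$. Establishing that the IMEX-RK order-$r$ conditions survive this degeneration is exactly where the stiffly-accurate hypothesis is indispensable: SA guarantees that the final solution satisfies the algebraic constraint at $t^{n+1}$ rather than only to interior-stage accuracy, so no order reduction occurs from the constraint. I would therefore devote the most care to verifying that each interior elliptic solve \eqref{c4} is well-posed and bounded uniformly in $\eps$ (so the limits may be taken termwise) and that the SA structure transfers interior-stage consistency into a genuine order-$r$ local error for the limiting constrained system.
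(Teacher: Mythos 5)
Your proposal follows essentially the same route as the paper's proof: stage-wise induction on the leading-order $\eps$-expansions, the divergence-free claim read off directly from $\bB=\nabla\times\bA$, the constant pressure and $\nabla\cdot\bu_0=0$ obtained by integrating the leading-order energy/EOS relation over $\Omega$ with the stated boundary conditions, the SA hypothesis used to identify $\bV^1$ with the last internal stage, and the error bound concluded from the limit scheme being an order-$r$ IMEX-RK discretization of \eqref{S2_E26}. Your closing caveat about rigorously justifying order $r$ for the constrained (DAE-type) limit system is a fair point, but the paper's own proof is only formal on exactly that step, so your argument is at least as complete as the one in the text.
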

\begin{proof}
We consider the first step from $t^0 = 0$ to $t^1 = \Delta t$ for the SI IMEX-RK scheme \eqref{a7}-\eqref{a10} of order $r$ applied to system \eqref{S2_E24} with well-prepared initial data \eqref{4.1}:
\begin{equation*}
    \left\{
    \begin{aligned}
    &p^0(\mathbf{x}) = p_* +  \eps^2 p^0_2(\mathbf{x}), \quad \rho^0(\mathbf{x}) = \rho^0_{inc} + \mathcal{O}(\eps),\\
    &\mathbf{u}^0(\mathbf{x}) = \mathbf{u}^0_{inc} + \mathcal{O}(\eps^2), \quad
    \mathbf{A}^0(\mathbf{x}) = \mathbf{A}^0_{inc} + \mathcal{O}(\eps), \quad
    \mathbf{B}^0(\mathbf{x}) = \mathbf{B}^0_{inc} + \mathcal{O}(\eps), 
    \end{aligned}\right.
\end{equation*}
where
\begin{equation*}
    \begin{aligned}
    \rho^0_{inc} := \rho_{inc}(\bx,0),\quad \mathbf{u}^0_{inc} := \mathbf{u}_{inc}(\bx,0),\quad \mathbf{A}^0_{inc} := \mathbf{A}_{inc}(\bx,0),\quad \mathbf{B}^0_{inc} := \mathbf{B}_{inc}(\bx,0).
    \end{aligned}
\end{equation*}
By well-prepared assumptions we have: $p_{inc}(\bx,0) := p_*$ is a constant independent of time and space, $\nabla \cdot \bu_{inc}^0 = 0$ and $\nabla \cdot \bB^0(\bx) = 0$.  

Now we consider a formal $\eps$-expansion of the quantities:
\begin{equation*}
    U_I^{(i)} = (\rho_I^{(i)},\mathbf{q}_I^{(i)},\bA_I^{(i)},E_I^{(i)}), \quad U_E^{(i)} = (\rho_E^{(i)},\mathbf{q}_E^{(i)},\bA_E^{(i)},E_E^{(i)}),
\end{equation*} 
and $\bB_E^{(i)}$, with $\mathbf{q}_I^{(i)} = \rho_I^{(i)} \bu_I^{(i)}$ and $\mathbf{q}_E^{(i)} = \rho_E^{(i)} \bu_E^{(i)}$. For example, the expansions of the density and pressure are
\begin{equation}
    \left\{
    \begin{aligned}
    &\rho_I^{(i)} = \rho_{0,I}^{(i)} + \eps \rho_{1,I}^{(i)}+ \cdots, \quad &\rho_E^{(i)}& = \rho_{0,E}^{(i)} + \eps \rho_{1,E}^{(i)}+ \cdots,\\
    &p_I^{(i)} = p_{0,I}^{(i)} + \eps^2 p_{2,I}^{(i)}+ \cdots, \quad &p_E^{(i)}& = p_{0,E}^{(i)} + \eps^2 p_{2,E}^{(i)}+ \cdots .
    \end{aligned}\right.
    \label{g3}
\end{equation}
To prove the theorem, we use mathematical induction.
\bit
\item AA property for internal stages $i = 1, \cdots, s.$
When $i = 1$, it leads to the same AP analysis for the scheme \eqref{S3_A1} with $\Delta t$ replaced by $a_{11} \Delta t$. To prove the result for $i > 1$, we make use of the induction hypothesis,
assuming the property holds for $j \leq i-1$, and prove that it still holds for $j = i$. For $j = 1, \cdots , i- 1$, we have
\begin{equation}
    p_{0,E}^{(j)} = p_{0,I}^{(j)} = p_*, \quad E_{0,E}^{(j)} = E_{0,I}^{(j)} = \frac{p_*}{\gamma -1}, \quad \nabla \cdot \bu_{0,I}^{(j)} = 0, \quad \nabla \cdot \bB_E^{(j)} = 0.  
    \label{g4}
\end{equation}
Now we insert the expansions \eqref{g3} into the explicit step in \eqref{a7}, and from the energy equation we get 
\begin{equation}
    E_{0,E}^{(i)} = E_{inc}^0 - \Delta t \sum^{i-1}_{j = 1} \Tilde{a}_{ij} \nabla \cdot \left( \bar{H}_0^{(j)} \mathbf{q}_{0,I}^{(j)}\right),
    \label{g5}
\end{equation}
with $E_{inc}^0 = p_*/ (\gamma -1)$ and for $j=1, \cdots, i-1$,
\begin{equation}
    \bar{H}_0^{(j)} = \frac{E_{0,E}^{(j)} + p_{0,E}^{(j)}}{\rho_{0,I}^{(j)}} = \frac{\gamma}{\gamma -1} \frac{p_*}{\rho_{0,I}^{(j)}}.
    \label{g6}
\end{equation}
Now substituting \eqref{g4}, \eqref{g6}, and $E_{0,E}^{(i)} = p_{0,E}^{(i)} / (\gamma-1)$ into \eqref{g5}, we obtain
\begin{equation*}
    p_{0,E}^{(i)} = p_* - \Delta t \gamma p_* \sum^{i-1}_{j = 1} \Tilde{a}_{ij} \nabla \cdot \bu_{0,I}^{(j)} = p_*,
\end{equation*}
and $E_{0,E}^{(i)} = p_*/ (\gamma-1)$ is also a constant for the stage $i$. From \eqref{a7}, we have
\begin{equation}
\left\{
\begin{aligned}
    \label{416}
    &\mathbf{A}^{(i)}_E = \mathbf{A}^n - \Delta t \sum^{i-1}_{j = 1} \Tilde{a}_{ij} \left( \left(\nabla \times \bA^{(j)}_E \right) \times \mathbf{u}^{(j)}_E \right),\\
    &\bB_E^{(i)} = \nabla \times \bA_E^{(i)}.  
\end{aligned}
\right.
\end{equation}
From \eqref{416}, we get $\nabla \cdot \bB_E^{(i)} = \nabla \cdot \left( \nabla \times \bA_E^{(i)}\right) = 0$. From \eqref{a7}, in the order of $\mathcal{O}(1)$, the density and momentum equations yield
\begin{equation}
\left\{
\begin{aligned}
    &\rho_{0,E}^{(i)} = \rho_{inc}^{0} - \Delta t \sum^{i-1}_{j = 1} \Tilde{a}_{ij} \nabla \cdot \mathbf{q}_{0,E}^{(j)},\\
    &\mathbf{q}^{(i)}_{0,E} = \mathbf{q}^0_{inc} - \Delta t \sum^{i-1}_{j = 1} \Tilde{a}_{ij} \left(\nabla \cdot \left(\rho_{0,E}^{(j)} \mathbf{u}_{0,E}^{(j)} \otimes \mathbf{u}_{0,E}^{(j)} - \mathbf{B}^{(j)}_{0,E} \otimes \mathbf{B}^{(j)}_{0,E} +\left(\frac{1}{2} {\Vert \mathbf{B}^{(j)}_{0,E} \Vert}^2 \right) \mathbf{I} \right) + \nabla p^{(j)}_{2,I} \right),
    \label{g16}
\end{aligned}
\right.
\end{equation}
with $\mathbf{q}^0_{inc} = (\rho \bu)^0_{inc}$ and $\nabla p_{0,E}^{(j)} = 0$ for $j=1,\cdots,i-1$.\\
Similarly, inserting expansions \eqref{g3} into \eqref{a8}, in the order of $\mathcal{O}(1)$, we get
\begin{equation}
\left\{
\begin{aligned}
    &\rho_{0,*}^{(i)} = \rho_{inc}^{0} - \Delta t \sum^{i-1}_{j = 1} {a}_{ij} \nabla \cdot \mathbf{q}_{0,E}^{(j)},\\
    &\mathbf{q}^{(i)}_{0,*} = \mathbf{q}^0_{inc} - \Delta t \sum^{i-1}_{j = 1} {a}_{ij} \left(\nabla \cdot \left(\rho_{0,E}^{(j)} \mathbf{u}_{0,E}^{(j)} \otimes \mathbf{u}_{0,E}^{(j)} - \mathbf{B}^{(j)}_{0,E} \otimes \mathbf{B}^{(j)}_{0,E} +\left(\frac{1}{2} {\Vert \mathbf{B}^{(j)}_{0,E} \Vert}^2 \right) \mathbf{I} \right) + \nabla p^{(j)}_{2,I} \right),  
    \label{g17}
\end{aligned}
\right.
\end{equation}
where from \eqref{p12} and \eqref{g4}, it follows  $\nabla \bar{p}^{(j)}_E = 0$ for $j = 1, \cdots , i-1$. Using \eqref{g6}, we get
\begin{equation}
    E_{0,*}^{(i)} = E_{inc}^0 - \Delta t \sum^{i-1}_{j = 1} {a}_{ij} \nabla \cdot \left( \bar{H}_0^{(j)} \mathbf{q}_{0,I}^{(j)}\right) = \frac{p_*}{\gamma-1}- \frac{\gamma p_*}{\gamma-1} \Delta t \sum^{i-1}_{j = 1} {a}_{ij} \nabla \cdot \bu_{0,I}^{(j)}.
    \label{g18}
\end{equation}
Therefore, from \eqref{g4} and \eqref{g18}, we get
\begin{equation*}
    E_{0,*}^{(i)} = \frac{p_*}{\gamma-1}.
\end{equation*}
Now from \eqref{g18} and \eqref{a9}, it follows for the energy equation
\begin{equation*}
    E^{(i)}_{0,I} = E^{(i)}_{0,*} - \Delta t a_{ii} \nabla \cdot (\bar{H}^{(i)}_{0} \mathbf{q}^{(i)}_{0,I}) = \frac{p_*}{\gamma-1}- \frac{\gamma p_*}{\gamma-1} \Delta t {a}_{ii} \nabla \cdot \bu_{0,I}^{(i)}
\end{equation*}
Considering the EOS $E_I^{(i)} = p_I^{(i)} / (\gamma -1) + \eps^2 ({\Vert\mathbf{q}_E^{(i)} \Vert}^2 / (2 \rho_E^{(i)}) + {\Vert\mathbf{B}_E^{(i)} \Vert}^2/2)$, to zeroth order in $\eps$, we get $E_{0,I}^{(i)} = p_{0,I}^{(i)}/(\gamma -1)$, and we obtain for the pressure
\begin{equation*}
    p^{(i)}_{0,I} = p_* - \gamma p_* \Delta t {a}_{ii} \nabla \cdot \bu_{0,I}^{(i)}.
\end{equation*}
Integrating it over the spatial bounded domain $\Omega$, and assuming periodic or compact support boundary conditions, we first obtain $p^{(i)}_{0,I} = p_*$, and then we get $\nabla \cdot \bu_{0,I}^{(i)} = 0$ at the stage $i$.

Finally, from \eqref{a9}, considering \eqref{g17}, we get for the density and momentum:
\begin{equation}
\left\{
\begin{aligned}
    &\rho_{0,I}^{(i)} = \rho_{inc}^{0} - \Delta t \sum^{i}_{j = 1} {a}_{ij} \nabla \cdot \mathbf{q}_{0,E}^{(j)},\\
    &\mathbf{q}^{(i)}_{0,I} = \mathbf{q}^0_{inc} - \Delta t \sum^{i}_{j = 1} {a}_{ij} \left(\nabla \cdot \left(\rho_{0,E}^{(j)} \mathbf{u}_{0,E}^{(j)} \otimes \mathbf{u}_{0,E}^{(j)} - \mathbf{B}^{(j)}_{0,E} \otimes \mathbf{B}^{(j)}_{0,E} +\left(\frac{1}{2} {\Vert \mathbf{B}^{(j)}_{0,E} \Vert}^2 \right) \mathbf{I} \right) + \nabla p^{(j)}_{2,I} \right).  
    \label{g22}
\end{aligned}
\right.
\end{equation}
Then equations \eqref{416}, \eqref{g16}, and \eqref{g22}, with $\nabla \cdot \bB_E^{(i)} = 0$, $p_{0,E}^{(i)} = p_{0,I}^{(i)} = p_*$, and the divergence-free leading order velocity, i.e. $\nabla \cdot \bu_{0,I}^{(i)} = 0$, provide a multi-stage SI IMEX-RK discretization of the limiting system \eqref{S2_E26} for the internal stage $i$. 
\eit

\bit
\item AA property for the updated numerical solution.
Assuming that SI IMEX-RK scheme in Section \ref{DBT} is SA, then the numerical solution coincides with the last internal stage $s$, and by setting $i = s$, we get
\begin{equation}
    p_0^1 = p_{0,I}^{(s)} = p_*, \quad \nabla \cdot \bu_{0}^{1} = \nabla \cdot \bu_{0,I}^{(s)} = 0,
    \label{23}
\end{equation}
and for the magnetic field
\begin{equation}
    \nabla \cdot \bB_E^{1} =\nabla \cdot ( \nabla \times \bA_E^{(1)}) = 0,  
    \label{24}
\end{equation}
namely we have \eqref{4.6}.
Now if we denote
\begin{equation*}
    \bV_{inc}(\bx;t) = (\rho_{inc}(\bx;t),\rho_{inc}(\bx;t) \bu_{inc}(\bx;t), p_{inc}(\bx;t), \bA_{inc}(\bx;t), \bB_{inc}(\bx;t))^T
\end{equation*}
as the exact solution of the incompressible MHD equations \eqref{S2_E26}, with initial data $\bV_{inc}(\bx;0) = (\rho^0(\bx),\rho^0(\bx) \bu^0(\bx), p^0, \bA^0(\bx), \bB^0(\bx))^T$, from equations \eqref{416}, \eqref{g16}, \eqref{g22}, \eqref{23}, and \eqref{24}, one gets in the low sonic Mach limit where $\eps = 0$, a high order SI IMEX-RK scheme of order $r$ for the numerical solutions of \eqref{S2_E26}, that is, the scheme \eqref{a7}-\eqref{a10} is AA, and \eqref{4.7} holds.
\eit
\end{proof}


\section{Numerical tests}
\label{sec5}
\setcounter{equation}{0}
\setcounter{figure}{0}
\setcounter{table}{0}

In this section, we perform some numerical experiments to validate the high order accuracy, AP, AA, divergence-free, and good performances of our proposed scheme for the MHD system with all sonic Mach numbers. The third order SI IMEX-RK with an SA property from \cite{boscarino2022high,huang2022high} is used for time discretizations. The fifth order finite difference WENO reconstruction\cite{boscarino2022high,huang2022high,shu1998essentially,jiang2000weighted} for the first order spatial derivatives, and a fourth order compact central difference discretization \cite{boscarino2022high,boscarino2019high} for the second order derivatives will be applied for spatial discretizations. Our scheme is referred as the ``IMEX" scheme. For some of the following examples, we will compare our results to reference solutions, which are produced by a fifth-order finite difference WENO scheme with a third-order strong stability preserving RK method. It is referred as an ``ERK” scheme if without a CT method \cite{jiang1999high}, while as an ``ERKC" scheme if with a CT method \cite{christlieb2014finite}. To avoid excessive numerical dissipations for low sonic Mach numbers, for the explicit schemes ``ERK'' and ``ERKC'', here we also take a Lax-Friedrichs flux the same as in \eqref{LF} and \eqref{3.21}, and numerically we find that it leads to much better results as we can see in the following. Unless otherwise specified, the ${\rm CFL}$ number is taken as $0.25$, and the gas constant is $\gamma = 5/3$. For 1D problems, the time step is
\begin{equation*}
    \Delta t = \frac{{\rm CFL} \Delta x}{\mathop{\max}\limits_{0 \leq k \leq N } \left(\vert u_k \vert + \hat{c}_f^{(k)}\right)},
\end{equation*}
with $N+1$ computational grid points, and the definition of $\hat{c}_f^{(k)}$ has been given in \eqref{3.21}. For 2D problems, the time step is
\begin{equation*}
    \Delta t = {\rm CFL} / \left(\frac{\mathop{\max}\limits_{0 \leq k \leq N_x, 0 \leq l \leq N_y } \left(\vert u_{k,l} \vert + \hat{c}_{f,x}^{(k,l)}\right)}{\Delta x} + \frac{\mathop{\max}\limits_{0 \leq k \leq N_x, 0 \leq l \leq N_y } \left(\vert v_{k,l} \vert + \hat{c}_{f,y}^{(k,l)}\right)}{\Delta y}\right),
    \label{s52}
\end{equation*}
with $(N_x+1) \times (N_y+1)$ computational grid points. Here, the fast speeds in the $x$-direciton ($\hat{c}_{f,x}^{(k,l)}$) and in the $y$-direciton ($\hat{c}_{f,y}^{(k,l)}$) are defined as
\begin{equation*}
\left\{
\begin{aligned}
    \hat{c}_{f,x}^{(k,l)} = {\left\{ \frac{1}{2} \left[ \left(\hat{a}^{(k,l)} \right)^2 + \frac{{\Vert \bB_{k,l} \Vert}^2}{\rho_{k,l} } \pm \sqrt{{\left(\left(\hat{a}^{({k,l} )} \right)^2 + \frac{{\Vert \bB_{k,l}  \Vert}^2}{\rho_{k,l} }\right)}^2 - \left(2{\hat{a}^{({k,l} )}} \sqrt{\frac{B_{x;{k,l}}^2}{\rho_{k,l} }
    } \right)^2 } \right] \right\}}^{\frac{1}{2}},\\
    \hat{c}_{f,y}^{(k,l)} = {\left\{ \frac{1}{2} \left[ \left(\hat{a}^{(k,l)} \right)^2 + \frac{{\Vert \bB_{k,l} \Vert}^2}{\rho_{k,l} } \pm \sqrt{{\left(\left(\hat{a}^{({k,l} )} \right)^2 + \frac{{\Vert \bB_{k,l}  \Vert}^2}{\rho_{k,l} }\right)}^2 - \left(2{\hat{a}^{({k,l} )}} \sqrt{\frac{B_{y;{k,l}}^2}{\rho_{k,l} }
    } \right)^2 } \right] \right\}}^{\frac{1}{2}},
\end{aligned}
\right.
\end{equation*}
where
\begin{equation*}
    \hat{a}^{({k,l})} = \min\left(\frac{1}{\eps},1 \right) \sqrt{\frac{\gamma p_{k,l}}{\rho_{k,l}}}.
\end{equation*}
\subsection{One dimensional case}
 Since the divergence-free condition is satisfied automatically in 1D, therefore, the CT method will not need to be applied in the following 1D numerical simulations.
\begin{exa} {\em
\label{exam1}
({\bf{Accuracy test}})
To assess the convergence of our proposed scheme, we simulate the nonlinear circularly polarized Alfvén wave problem in 1D. The smooth initial conditions are given as \cite{christlieb2016high}:
\begin{equation}
\label{5.1}
\begin{aligned}
       & (\rho,u,v,w,B_x,B_y,B_z,p)(0,x) \\
    &\quad = (1, 0, 0.1 \sin(2 \pi x), 0.1 \cos(2 \pi x), 1, 0.1 \sin(2 \pi x), 0.1 \cos(2 \pi x), 0.1) \quad x \in [0,1],
\end{aligned}
\end{equation}
with a periodic boundary condition. The exact solution to \eqref{5.1} propagates with a unit Alfvén speed  (that is $U(t,x) = U(0,x+t)$.). We set the time step to be
\begin{equation*}
    \Delta t = \frac{{\rm CFL} \Delta x^\frac{5}{3}}{\mathop{\max}\limits_{0 \leq k \leq N } \left(\vert u_k \vert + \hat{c}_f^{(k)}\right)}.
\end{equation*}
The sonic Mach number is $\eps = 1$ and we compute up to a final time $T = 1$. The numerical $L_{1}$, $L_{2}$, and $L_{\infty}$ errors and orders are shown in Table \ref{T1}. We can observe that a fifth-order spatial accuracy is obtained.

\begin{table}[htbp]
  \caption{ Example \ref{exam1}. The  $L_{1}$, $L_{2}$, and $L_{\infty}$ errors and orders for $\rho v$ with $\eps=1 $  at $T = 1$. }
  \begin{center}
  \begin{tabular}{|c|c|c|c|c|c|}\hline
   $N$ & 10 & 20 & 40 & 80 & 160   \\ \hline
   {$L_{1}$ error} & 6.24E-04 & 2.03E-05 & 6.41E-07 & 2.01E-08 & 6.27E-10\\ \hline
   order & -- & 4.94 & 4.99 & 5.00 & 5.00 \\ \hline
   {$L_{2}$ error} & 6.83E-04 & 2.25E-05 & 7.11E-07 & 2.23E-08 & 6.97E-10\\ \hline
   order & -- & 4.93 & 4.98 & 5.00 & 5.00  \\ \hline
   {$L_{\infty}$ error} & 9.65E-04 & 3.15E-05 & 1.00E-06 & 3.15E-08 & 9.85E-10\\ \hline
   order & -- & 4.94 & 4.97 & 4.99 & 5.00  \\ \hline
  \end{tabular}
  \end{center}
  \label{T1}
\end{table}
}
\end{exa}

\begin{exa} {\em
\label{exam2}
({\bf{Accuracy test for a range of $\eps$}}) We take a well-prepared initial data which is $\eps-$dependent:
\begin{equation*}
    \left\{
    \begin{aligned}
        \rho(x,0) & = 1+\eps^2\sin^2(2 \pi x), \quad &p(x,0) & = (1+\eps^2\sin^2(2 \pi x))^{\gamma}, \\
        u(x,0) & = \eps^2\sin(2 \pi x), \quad &B_x(x,0) & = 0.5, \\
        v(x,0) & = \sin(2 \pi x)+\eps^2 \cos(2 \pi x), \quad &B_y(x,0) & = (1 + \eps^2) \sin(2 \pi x), \\
        w(x,0) & = 0, \quad &B_z(x,0) & = (1 + \eps^2) \cos(2 \pi x), \\     
    \end{aligned}
    \right.
\end{equation*}
on the domain $\Omega = [0,1]$ with $\gamma = 1.4$ and periodic boundary conditions. Reference solutions are computed with $N = 320$. Four different sonic Mach numbers $\eps = 1, 10^{-2}, 10^{-6}$, and $0$ are taken, with a final time $T = 0.05$. Numerical errors and orders are shown in
Table \ref{T2}. For the intermediate value of $\eps = 10^{-2}$, order reduction
is observed, which is a typical behavior of high-order IMEX schemes for these multi-scale
problems \cite{boscarino2022high}. Around 5th-order accuracy for $\eps = 1, 10^{-6}$, and $0$ is observed.
\begin{table}[htbp]
  \caption{ Example \ref{exam2}. The  $L_{1}$ errors and orders for $\rho v$ with $\eps=1,  10^{-2}, 10^{-6}$, and $0$. }
  \begin{center}
		\begin{tabular}{|c|c|c|c|c|c|c|c|c|}
			\hline
			\multicolumn{1}{|c|}{\multirow{2}*{\diagbox{$N$}{$\eps$}}}&\multicolumn{2}{c|}{ $\eps=1$}&\multicolumn{2}{c|}{$\eps=10^{-2}$} &\multicolumn{2}{c|}{$\eps=10^{-6}$}&\multicolumn{2}{c|}{$\eps=0$}\\
			\cline{2-9}
\multicolumn{1}{|c|}{} &$L_1$ error& order &$L_1$ error& order&$L_1$ error& order&$L_1$ error& order\\  \cline{1-9}
10 & 3.08E-02 & --   & 4.93E-04 & --   & 4.82E-04 & --   & 4.82E-04 & --\\ \hline
20 & 6.47E-03 & 2.25 & 7.18E-05 & 2.78 & 1.61E-05 & 4.90 & 1.61E-05 & 4.90\\ \hline
40 & 8.32E-04 & 2.96 & 3.33E-05 & 1.11 & 5.22E-07 & 4.95 & 5.22E-07 & 4.95\\ \hline
80 & 3.74E-05 & 4.48 & 1.01E-05 & 1.72 & 1.76E-08 & 4.89 & 1.76E-08 & 4.89\\ \hline
160 & 1.22E-06 & 4.94& 3.34E-06 & 1.60 & 6.65E-10 & 4.72 & 6.65E-10 & 4.72\\ \hline
\end{tabular}
	\end{center}
  \label{T2}
\end{table}

}
\end{exa}

\begin{exa} {\em
\label{exam3}
({\bf{MHD shock tube}}) In this example, we consider a 1D MHD shock tube in the compressible regime where the sonic Mach number is of $\mathcal{O}(1)$. The initial conditions are as follows
\begin{equation*}
\begin{aligned}
        &(\rho,u,v,w,B_x,B_y,B_z,p)(0,x) \\
    &\quad = \left\{
    \begin{aligned}
        &(1, 0, 0, 0, 0.75, 1, 0, 1) \quad& x &\in [0,0.5),\\
        &(0.125, 0, 0, 0, 0.75, -1, 0, 0.1) \quad& x& \in [0.5,1],
    \end{aligned}
    \right.
\end{aligned}
\end{equation*}
with a final time $T = 0.1$. The specific heat ratio is set to be $\gamma = 2$ with $\eps = 1$ on the domain $\Omega = [0,1]$. Reflective boundary conditions are considered, and we take $N = 200$. The results are shown in Fig.~\ref{F1}, and the solutions obtained by the ERK scheme with $N=2400$ are displayed as references. From these results, it can be seen that our IMEX scheme can capture the MHD waves very well.
\begin{figure}[hbtp]
\begin{center}
	\mbox{
		{\includegraphics[width=8cm]{./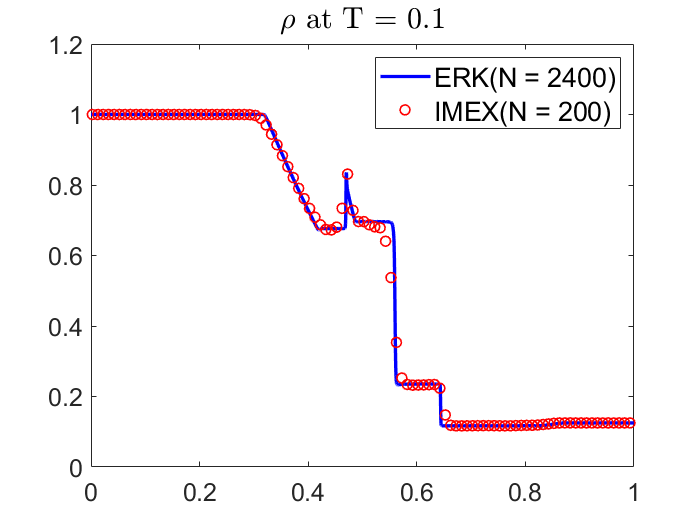}}\quad
		{\includegraphics[width=8cm]{./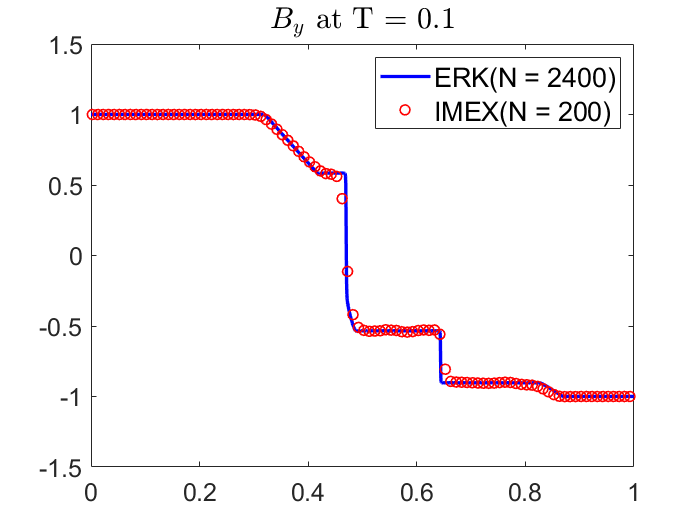}}
		}
\caption{ MHD shock tube solutions for Example \ref{exam3}. Left: density $\rho$; Right: magnetic field $B_y$. Every two points are plotted for the IMEX scheme.}
\label{F1}
\end{center}
\end{figure}

}
\end{exa}

\subsection{Two dimensional case}
A CT method described in \cite{christlieb2014finite} will be used to maintain the divergence-free condition. In 2D, $\bB = \nabla \times \bA$ can be written as $B_x = \partial A_z / \partial y$ and $B_y = -\partial A_z / \partial x$. Therefore, we only need to update the third component $A_z$ of $\bA$, and ignore $A_x$ and $A_y$. All solutions of the following tests maintain $\nabla \cdot \bB = 0$ up to machine round off errors\cite{christlieb2016high}.
\begin{exa} {\em
\label{exam4}
({\bf{Accuracy test}}) The 2D version of the smooth Alfvén wave problem is obtained by rotating the direction of propagation by an angle of $\theta$, so that the wave now propagates along the direction $\bn = (-\cos{\theta}, -\sin{\theta})$ on the domain $\Omega = [0,1/\cos{\theta}] \times [0,1/\sin{\theta}]$ \cite{christlieb2016high}. In this numerical test, periodic boundary conditions are applied on all both directions with $\eps = 1, \theta = \pi /4$, and $T = 1$. Similarly, if we take 
\begin{equation*}
     \Delta t = {\rm CFL} / \left(\frac{\mathop{\max}\limits_{0 \leq k \leq N_x, 0 \leq l \leq N_y } \left(\vert u_{k,l} \vert + \hat{c}_{f,x}^{(k,l)}\right)}{\Delta x^\frac{5}{3}} + \frac{\mathop{\max}\limits_{0 \leq k \leq N_x, 0 \leq l \leq N_y } \left(\vert v_{k,l} \vert + \hat{c}_{f,y}^{(k,l)}\right)}{\Delta y^\frac{5}{3}}\right),
\end{equation*}
from Table \ref{T3} we can find around 5th-order accuracy for the momentum $\rho u$, and 4th-order accuracy for the magnetic field component $B_x$ where a fourth-order central difference discretization is used for $\bB = \nabla \times \bA$.

\begin{table}[htbp]
  \caption{ Example \ref{exam4}. The $L_{1}$, $L_{2}$, and $L_{\infty}$ errors and orders for $\rho u$ and $B_x$ with $\eps=1$  and $T = 1$. }
  \begin{center}
  \begin{tabular}{|c|c|c|c|c|c|c|}\hline
    &$N_x \times N_y$ & $8 \times 8$ & $16 \times 16$ & $32 \times 32$ & $64 \times 64$ & $128 \times 128$  \\ \hline
   {\multirow{6}*{$\rho u$}}
   &{$L_{1}$ error} & 1.54E-03 & 5.73E-05 & 1.86E-06 & 5.88E-08 & 1.84E-09 \\ \cline{2-7}
   &order & -- & 4.75 & 4.94 & 4.99 & 5.00 \\ \cline{2-7}
   &{$L_{2}$ error} & 1.80E-03 & 6.47E-05 & 2.08E-06 & 6.55E-08 & 2.07E-09 \\ \cline{2-7}
   &order & -- & 4.80 & 4.96 & 4.99 & 4.99 \\ \cline{2-7}
   &{$L_{\infty}$ error} & 2.55E-03 & 9.26E-05 & 2.98E-06 & 9.44E-08 & 3.00E-09 \\ \cline{2-7}
   &order & -- & 4.79 & 4.96 & 4.98 & 4.98 \\ \hline
   {\multirow{6}*{$B_x$}}
   &{$L_{1}$ error} & 9.29E-04 & 6.36E-05 & 3.17E-06 & 1.69E-07 & 9.64E-09 \\ \cline{2-7}
   &order & -- & 3.87 & 4.33 & 4.23 & 4.13 \\ \cline{2-7}
   &{$L_{2}$ error} & 1.08E-03 & 7.02E-05 & 3.51E-06 & 1.88E-07 & 1.07E-08 \\ \cline{2-7}
   &order & -- & 3.94 & 4.32 & 4.23 & 4.13 \\ \cline{2-7}
   &{$L_{\infty}$ error} & 1.29E-03 & 9.69E-05 & 5.06E-06 & 2.70E-07 & 1.52E-08 \\ \cline{2-7}
   &order & -- & 3.73 & 4.26 & 4.23 & 4.15 \\ \hline
  \end{tabular}
  \end{center}
  \label{T3}
\end{table}
}
\end{exa}

\begin{exa} {\em
\label{exam5}
({\bf{Accuracy test for a range of $\eps$}}) In this 2D case, we set a well-prepared initial condition as follows
\begin{equation*}
    \left\{
    \begin{aligned}
        \rho(x,y,0) & = 1+\eps^2\sin^2(2 \pi (x+y)), \quad &p(x,y,0) & = (1+\eps^2\sin^2(2 \pi (x+y)))^{\gamma}, \\
        u(x,y,0) & = \sin(2 \pi (x-y)) + \eps^2 \sin(2 \pi (x+y)), \quad &B_x(x,y,0) & = -\frac{1}{\sqrt{2}}\sin(2 \pi (x+y)), \\
        v(x,y,0) & = \sin(2 \pi (x-y)) + \eps^2 \cos(2 \pi (x+y)), \quad &B_y(x,y,0) & = \frac{1}{\sqrt{2}}\sin(2 \pi (x+y)), \\
        w(x,y,0) & = 0, \quad &B_z(x,y,0) & = \cos(2 \pi (x+y)), \\     
    \end{aligned}
    \right.
\end{equation*}
with the initial magnetic potential $A_z(x,y,0) = \cos(2 \pi (x+y)) / (2 \sqrt{2} \pi)$. As in Example \ref{exam2}, four different sonic Mach numbers are considered with periodic boundary conditions on the domain $\Omega = [0,1] \times [0,1]$. We compute the solution up to a final time
$T = 0.01$ on mesh grid points of $N^2$. The $L_1$ errors and orders of the accuracy are shown in Table \ref{T4}. From this Table, we can see high-order accuracy can be obtained for $\eps = 1, 10^{-6}$, and $0$. However, under the current mesh sizes, order degeneracy can also be found for $\eps = 10^{-2}$, which is similar to the 1D case.
\begin{table}[htbp]
  \caption{ Example \ref{exam5}. The  $L_{1}$ errors and orders for $\rho u$ with $\eps=1,  10^{-2}, 10^{-6}$, and $0$. }
  \begin{center}
		\begin{tabular}{|c|c|c|c|c|c|c|c|c|}
			\hline
			\multicolumn{1}{|c|}{\multirow{2}*{\diagbox{$N$}{$\eps$}}}&\multicolumn{2}{c|}{ $\eps=1$}&\multicolumn{2}{c|}{$\eps=10^{-2}$} &\multicolumn{2}{c|}{$\eps=10^{-6}$}&\multicolumn{2}{c|}{$\eps=0$}\\
			\cline{2-9}
\multicolumn{1}{|c|}{} &$L_1$ error& order &$L_1$ error& order&$L_1$ error& order&$L_1$ error& order\\  \cline{1-9}
8 & 8.98E-02 & -- & 5.18E-03 & -- & 7.65E-03 & -- & 7.65E-03 & --\\ \hline
16 & 1.39E-02 & 2.69 & 2.09E-03 & 1.31 & 8.74E-04 & 3.13 & 8.74E-04 & 3.13\\ \hline
32 & 6.76E-04 & 4.36 & 2.09E-03 & -- & 1.68E-05 & 5.70 & 1.68E-05 & 5.70\\ \hline
64 & 2.28E-05 & 4.89 & 2.41E-03 & -- & 7.42E-07 & 4.50 & 7.43E-07 & 4.50\\ \hline
128 & 7.15E-07 & 5.00 & 6.41E-04 & 1.91 & 1.42E-08 & 5.70 & 1.40E-08 & 5.73\\ \hline
\end{tabular}
	\end{center}
  \label{T4}
\end{table}
}
\end{exa}

\begin{exa} {\em
\label{exam6}
({\bf{Orszag-Tang vortex}}) Next we consider the Orszag–Tang vortex problem, which is widely considered as a standard test for MHD \cite{christlieb2014finite,christlieb2016high,tang2000high,balsara2014multidimensional}.
The problem has smooth initial conditions
\begin{equation*}
    \begin{aligned}
        &(\rho,u,v,w,B_x,B_y,B_z,p)(x,y,0) \\
    =& (\gamma^2, -\sin{(y)}, \sin{(x)}, 0, -\sin{(y)}, \sin{(2x)}, 0, \gamma),
\end{aligned}
\end{equation*}
with the initial magnetic potential: $A_z = 0.5 \cos{(2x)} + \cos{(y)}$ on the domain $\Omega = [0,2\pi] \times [0,2\pi]$. Periodic boundary conditions are imposed on all boundaries. As time evolves, the solution forms several shock waves and a vortex structure in the middle of the computational domain. We set $\eps = 1$, $N_x = N_y = 192$ and present the density at $T = 0.5, T = 2, T = 3$, and $T = 4$ in Fig.~\ref{F6a}. A slice of the pressure at $y = 0.625 \pi$ and $T = 3$ is shown on the right panel of Fig.~\ref{F6b}. We find our IMEX scheme can successfully capture the shocks. We do not observe significant oscillations in any of the conserved quantities, and our results are in good agreement with those given in \cite{christlieb2014finite,christlieb2016high}.
\begin{figure}[hbtp]
\begin{center}
	\mbox{
		{\includegraphics[width=7cm]{./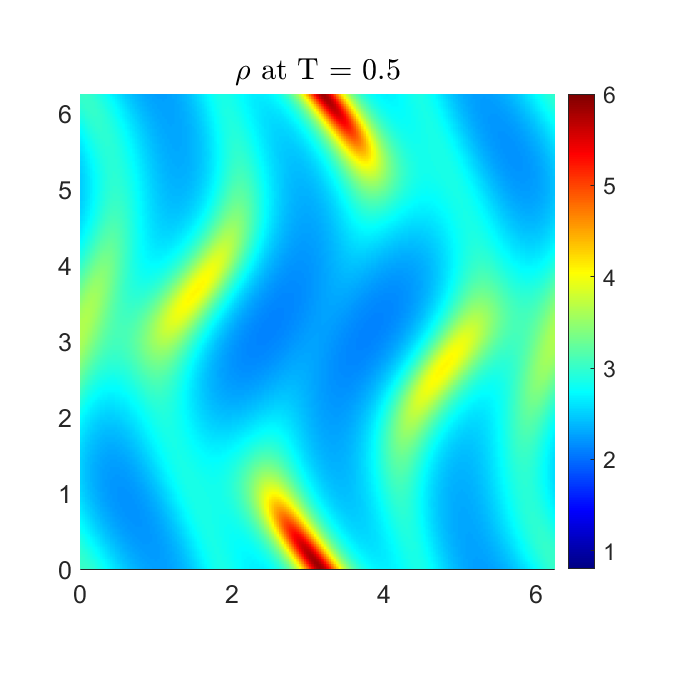}}
		{\includegraphics[width=7cm]{./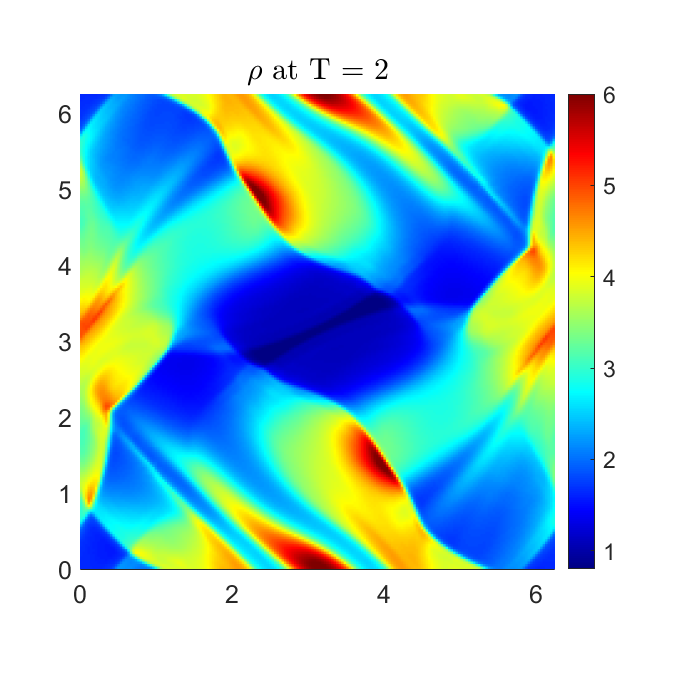}}
  }\\ \mbox{
        {\includegraphics[width=7cm]{./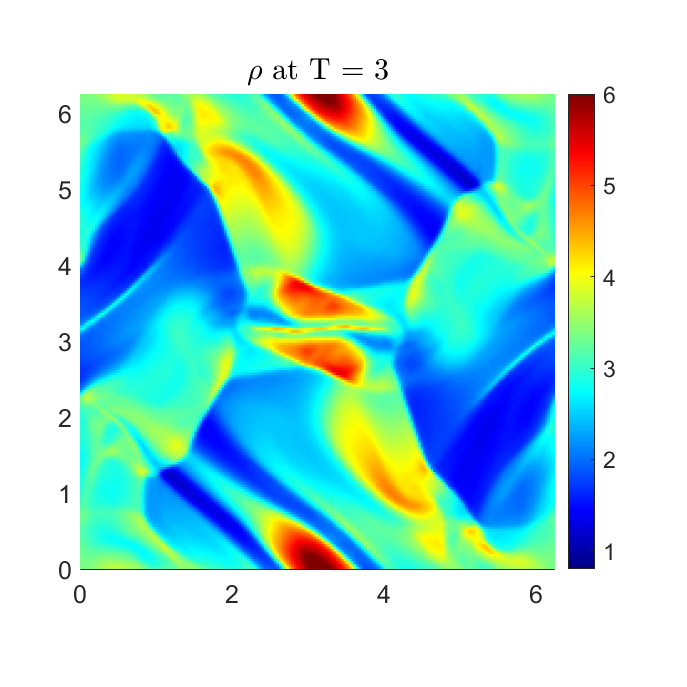}}
		{\includegraphics[width=7cm]{./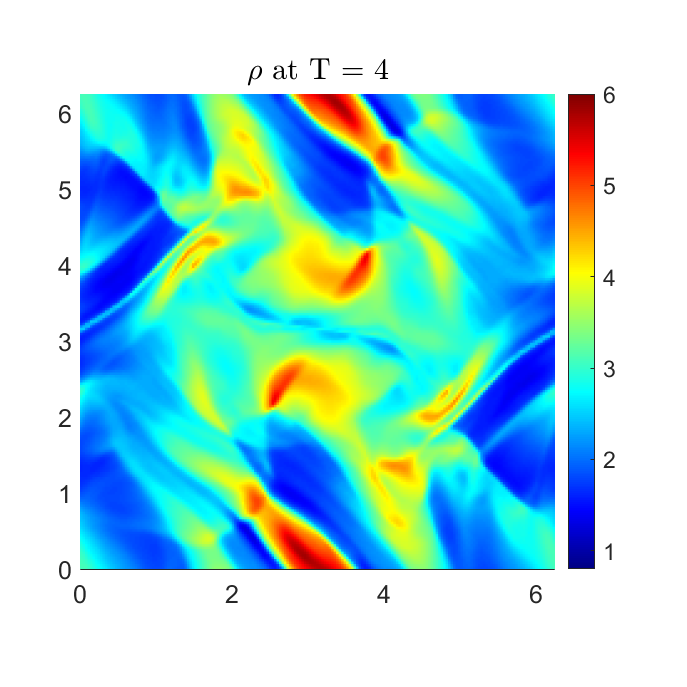}}
		}
\caption{ Density for Orszag-Tang problem of Example \ref{exam6} on the mesh grid $192^2$.}
\label{F6a}

\end{center}
\end{figure}
\begin{figure}[htbp]
\begin{center}
	\mbox{
        {\includegraphics[width=7cm]{./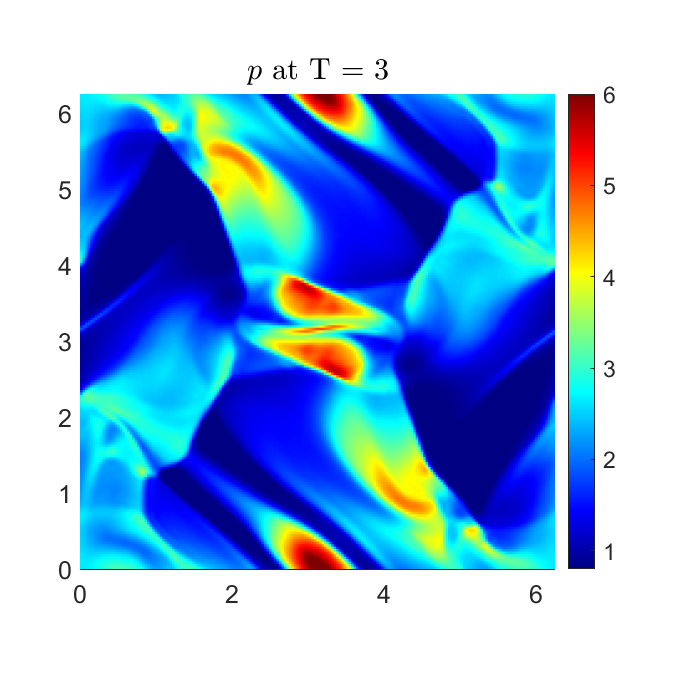}}
        \raisebox{0.04\height}
        {\includegraphics[width=7cm]{./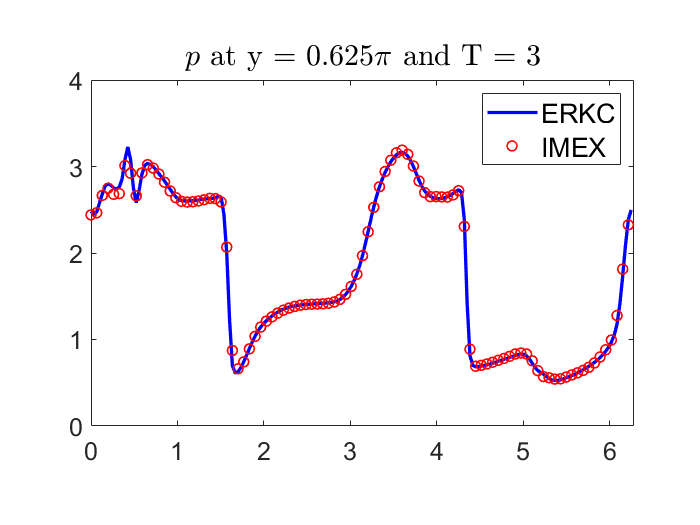}}
        }
\caption{ Example \ref{exam6}. Thermal Pressure for Orszag-Tang problem on the mesh grid $192^2$. Every two points are plotted for the IMEX scheme for the cutting plot.}
\label{F6b}
\end{center}
\end{figure}
}
\end{exa}

\begin{exa} {\em
\label{exam7}
({\bf{MHD blast wave}}) MHD blast wave has been commonly
used to test numerical methods for the MHD system \cite{christlieb2018high,han2007adaptive}. Here, we set periodic boundary conditions on the domain $\Omega = [-0.5,0.5] \times [-0.5,0.5]$. The initial conditions are
\begin{equation*}
\begin{aligned}
        &(\rho,u,v,w,B_x,B_y,B_z,p)(x,y,0) \\
   &\quad = \left\{
    \begin{aligned}
        &(1, 0, 0, 0, 10\sin{\theta}, 10\cos{\theta
        }, 0, 100) \quad& r &\leq 0.125,\\
        &(1, 0, 0, 0, 10\sin{\theta}, 10\cos{\theta
        }, 0, 10) \quad& r &> 0.125,
    \end{aligned}
    \right.
\end{aligned}
\end{equation*}
with $\theta = \pi / 4$ and $r = \sqrt{x^2+y^2}$. The initial magnetic potential is given by $A_z(x,y,0) = 5 \sqrt{2}(-x+y)$ with a first order extrapolation if outside the boundary. In Fig.~\ref{F7a}, we show the density and magnetic pressure obtained by the IMEX and ERKC schemes on a $200 \times 200$ mesh with $\eps = 0.9$ at $T=0.02$. The slices of the density and the magnetic
pressure along $y = -x+0.5$ are also shown in Fig.~\ref{F7b}. We see that the solution of the IMEX scheme is close to the reference ERKC solutions and no obvious oscillations are observed. In our test, the maximum error of discrete divergence of the magnetic field is $2.87 \times 10^{-11}$ for the IMEX scheme and $2.77 \times 10^{-11}$ for the ERKC scheme, both are close to machine precision.
\begin{figure}[hbtp]
\begin{center}
	\mbox{
		{\includegraphics[width=7cm]{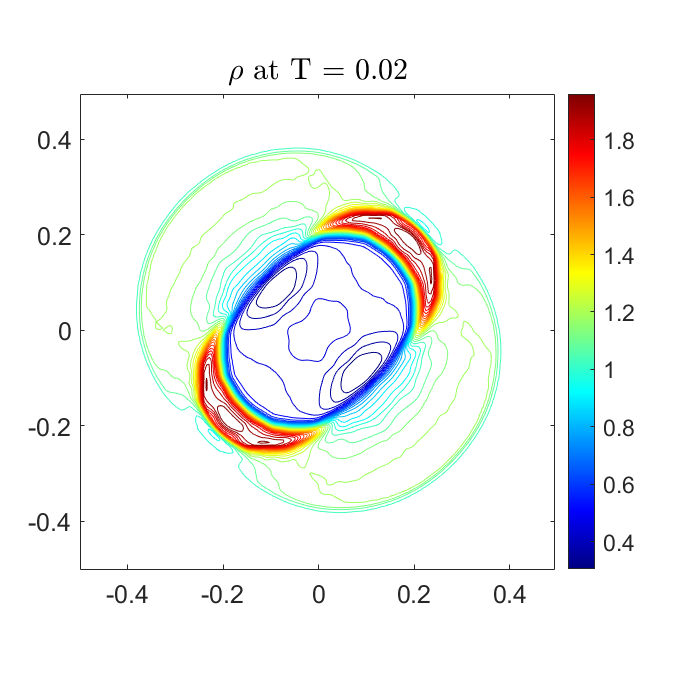}}
		{\includegraphics[width=7cm]{./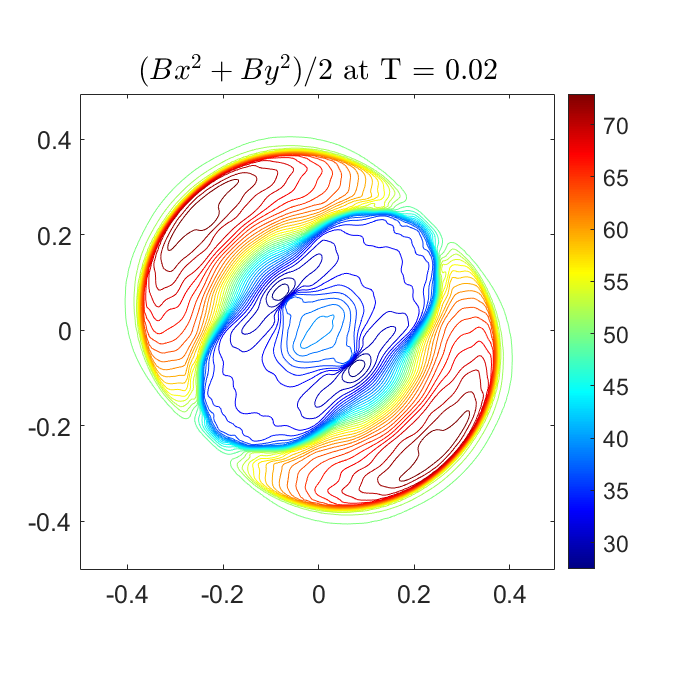}}
        }\\ \mbox{
		{\includegraphics[width=7cm]{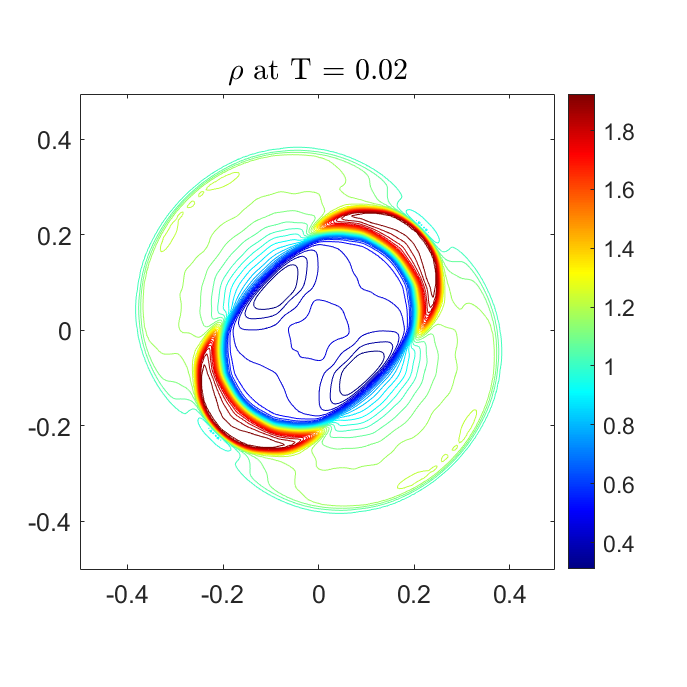}}
		{\includegraphics[width=7cm]{./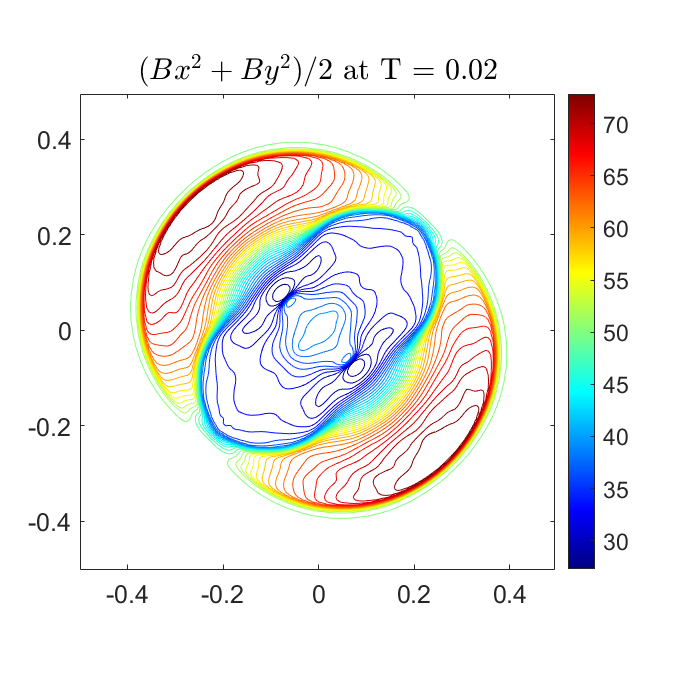}}
        }
\caption{ Example \ref{exam7}. MHD blast wave problem on the mesh grid $200^2$. Top: IMEX; Bottom: ERKC. 35 equally spaced contours are used.}
\label{F7a}
\end{center}
\end{figure}
\begin{figure}[hbtp]
\begin{center}
	\mbox{
		{\includegraphics[width=7.3cm]{./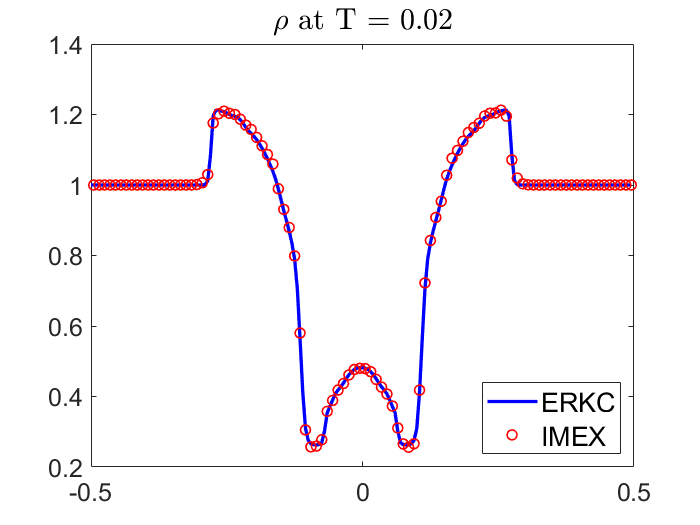}}
		{\includegraphics[width=7.3cm]{./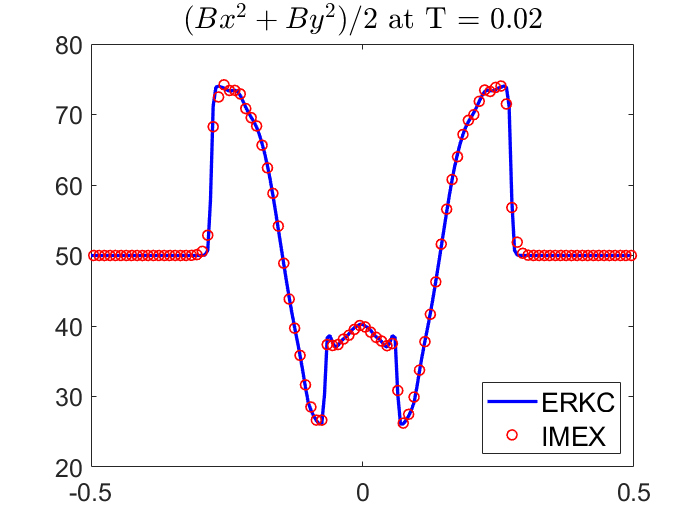}}
        }
\caption{ Example \ref{exam7} MHD blast wave problem. 1D cutting plots along $y=-x + 0.5$.  Every two points are plotted for the IMEX scheme for the cutting plots.}
\label{F7b}
\end{center}
\end{figure}
}
\end{exa}

\begin{exa} {\em
\label{exam8}
({\bf{Field loop advection}}) Now, we show the 2D field loop advection test \cite{mamashita2021slau2,minoshima2019high} to assess the capability of capturing a tangential discontinuity for a multidimensional flow. The computational domain is $\Omega = [-1,1] \times [-0.5,0.5]$ with periodic boundary conditions. The domain is divided with $256 \times 128$ uniform cells. The initial conditions are set as
\begin{equation*}
\begin{aligned}
        &(\rho,u,v,w,B_x,B_y,B_z,p)(r,0) \\
    &\quad = (1, v_0 \cos{(\theta_0)}, v_0 \sin{(\theta_0)}, 0, \partial A_z / \partial y, - \partial A_z / \partial x, 0, 1)
\end{aligned}
\end{equation*}
with $v_0 = 1, \cos{(\theta_0)} = -2 / \sqrt{5}, \sin{(\theta_0)} = 1 / \sqrt{5}, r = \sqrt{x^2+y^2}$, and
\begin{equation*}
\begin{aligned}       
    A_z(r) = \left\{
    \begin{aligned}
        &10^{-3}(0.3-r) \quad& r \le 0.3,&\\
        &0 \quad& otherwise.&
    \end{aligned}
    \right.
\end{aligned}
\end{equation*}
In this test, a loop of the magnetic field is advected. In Fig.~\ref{F8}, we show both the deviation of the density $\rho$ away from 1 and the magnetic pressure $\Vert \bB \Vert^2 / 2$ for the case of $\eps = 0.1$ at $T = \sqrt{5}$. For the magnetic pressure, both schemes preserve the initial symmetric loop structure very well, and the IMEX scheme has almost the same results as the ERKC scheme. The one-dimensional cutting plots for the magnetic pressure along $x = 0.2$ show that both schemes slightly distort the profile at $y = 0.15$. For the deviation of the density, we can find the IMEX scheme is better than the ERKC scheme for preserving a symmetric structure, as compared with the results of the ERKC scheme on a uniform mesh of $384 \times 192$ cells. For this problem in the intermediate regime with $\eps=0.1$, we also compare the CPU cost of two schemes in Table \ref{T8}. We can observe that the ERKC scheme costs less when $\eps$ is of $\mathcal{O}(1)$. However, the IMEX scheme is much more efficient in the intermediate and low sonic Mach regimes. The computational cost for the ERKC scheme increases very rapidly, as the sonic Mach number $\eps$ becomes smaller.
This example demonstrates the main advantage of an AP scheme 
for such multiscale problems.
\begin{figure}[hbtp]
\begin{center}
        \mbox{
		{\includegraphics[width=8cm]{./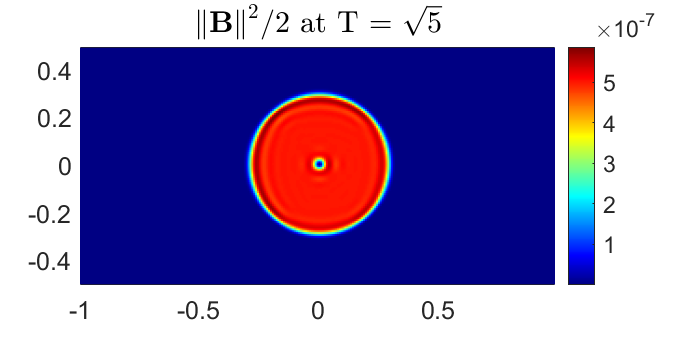}}
		{\includegraphics[width=8cm]{./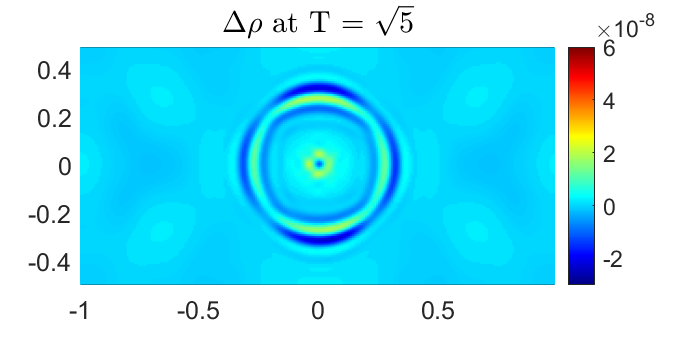}}
	    }\\ \mbox{
        {\includegraphics[width=8cm]{./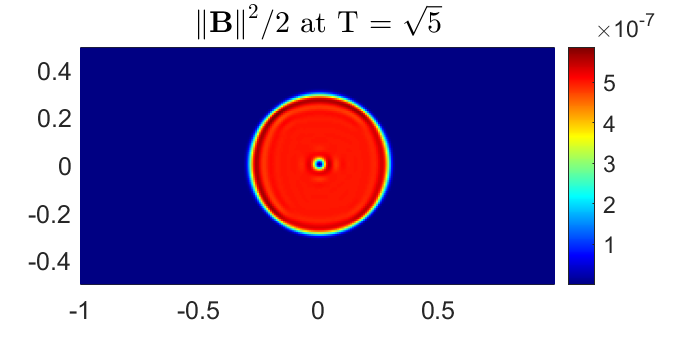}}
		{\includegraphics[width=8cm]{./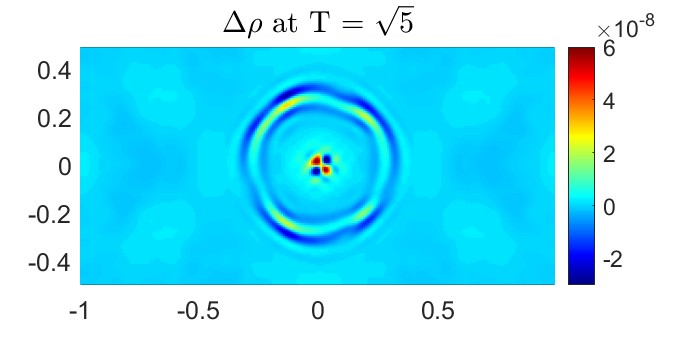}}
		}\\ \mbox{
        {\includegraphics[width=8cm]{./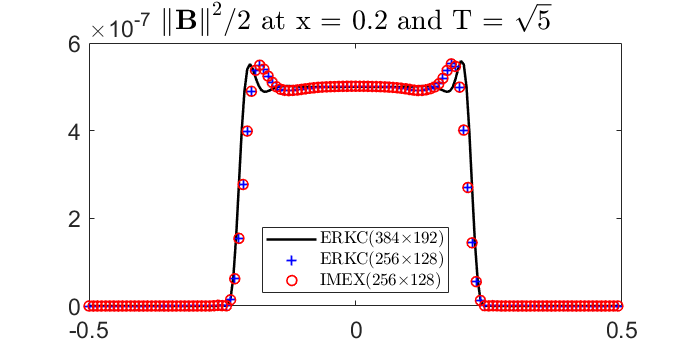}}
		{\includegraphics[width=8cm]{./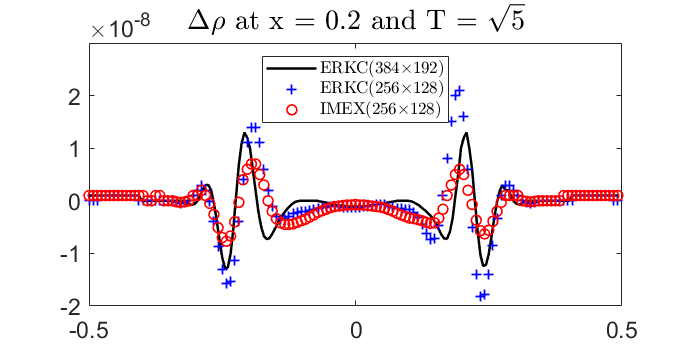}}
		}
\caption{ Example \ref{exam8}. Field loop advection on the mesh grid $256 \times 128$. Top left: magnetic pressure (IMEX); Middle left: magnetic pressure (ERKC); Bottom left: 1D cutting plot for the magnetic pressure; Top right: deviation of density (IMEX); Middle right: deviation of density (ERKC); Bottom right: 1D cutting plot for the deviation of density.}
\label{F8}
\end{center}
\end{figure}

\begin{table}[htbp]
  \caption{ Example \ref{exam8}. The CPU cost (seconds) for the IMEX and ERKC schemes on the mesh grid $256 \times 128$ at $T = \sqrt{5}$. }
  \begin{center}
  \begin{tabular}{|c|c|c|c|}\hline
   $\eps$ & 0.5 & 0.1 & 0.05   \\ \hline
   IMEX & 7634.880 & 5687.467 & 8015.959 \\ \hline
   ERKC & 4736.466 & 19719.964 & 38666.192  \\ \hline
  \end{tabular}
  \end{center}
  \label{T8}
\end{table}
}
\end{exa}

\begin{exa} {\em
\label{exam9}
({\bf{Magnetized Kelvin–Helmholtz instability}}) Finally, we consider a simulation of the magnetized Kelvin–Helmholtz instability problem\cite{boscarino2022high,leidi2022finite}. The initial conditions are
\begin{equation*}
    \begin{aligned}
        &(\rho,u,v,w,B_x,B_y,B_z,p)(x,y,0) \\
    &\quad = (\gamma, 1-2\eta(x), 0.1 \sin{(2 \pi x)}, 0, 0.1, 0, 0, 1),
\end{aligned}
\end{equation*}
with a magnetic potential $A_z(x,y,0) = 0.1 y$ on the domain $\Omega = [0,2] \times [-0.5,0.5]$, and
\begin{equation*}
\begin{aligned}       
    \eta(x) = \left\{
    \begin{aligned}
        &\frac{1}{2} \left(1 + \sin\left(16 \pi \left(y + \frac{1}{4}\right)\right)\right) \quad& y \in [-\frac{9}{32},-\frac{7}{32}],&\\
        &1 \quad& y \in [-\frac{7}{32},\enspace \; \frac{7}{32}],&\\
        &\frac{1}{2} \left(1 - \sin\left(16 \pi \left(y - \frac{1}{4}\right)\right)\right) \quad& y \in [\enspace \; \frac{7}{32},\enspace \; \frac{9}{32}],&\\
        &0 \quad& otherwise.&
    \end{aligned}
    \right.
\end{aligned}
\end{equation*}
Periodic boundary conditions are used for the conserved quantities, while a first-order extrapolation is utilized when the magnetic potential is outside of the computational domain. We run the solution up to $T = 0.8$ with $\gamma = 1.4$ and $\eps = 10^{-6}$ on a mesh grid $N_x \times N_y = 256 \times 128$. We define the local sonic Mach number as $M_{a} = \sqrt{u^2+v^2} / \sqrt{\gamma p / \rho}$, and the ratio of the local sonic Mach number $M_{ratio}$ to the maximum value of $M_a$ as $M_{ratio} = M_a/ \max(M_a)$, where the maximum is taken over all computational grid points. The vorticity $\omega = v_x - u_y$, where $v_x$ and $u_y$ are discretized by the 4th order central difference, and $M_{ratio}$ are shown in Fig.~\ref{F9}. We observe that they are comparable to the results in \cite{leidi2022finite,boscarino2022high}, namely, our IMEX scheme can capture the incompressible MHD system in the low sonic Mach number regime very well, which verifies the AP property of our scheme. 
\begin{figure}[hbtp]
\begin{center}
	\mbox{
		{\includegraphics[width=8cm]{./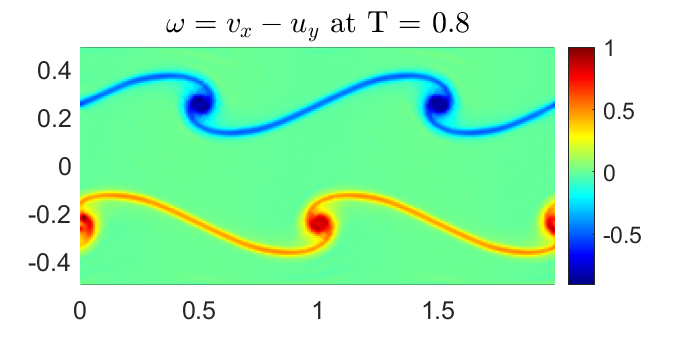}}
		{\includegraphics[width=8cm]{./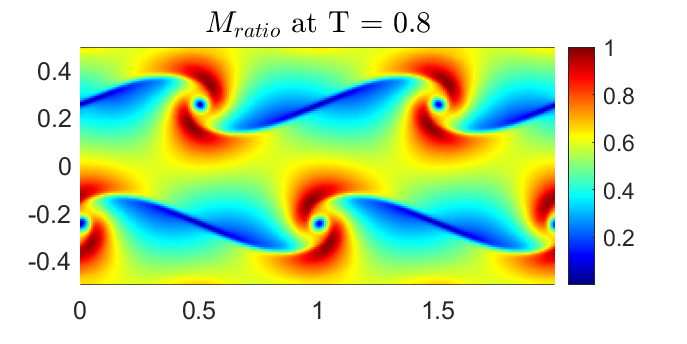}}
        }
\caption{ Magnetized Kelvin–Helmholtz instability problem for Example \ref{exam9} on the mesh grid $256 \times 128$. Left: Vorticity $\omega = v_x-u_y$; Right: The ratio of the
local sonic Mach number $M_{ratio}$.}
\label{F9}
\end{center}
\end{figure}
}
\end{exa}

\section{Conclusion}
\label{sec6}
\setcounter{equation}{0}
\setcounter{figure}{0}
\setcounter{table}{0}
In this paper, a high-order SI AP scheme with a discrete divergence-free property for the MHD equations has been developed. High-order accuracy in time has been obtained by SI temporal integrator based on an IMEX-RK framework. High-order accuracy in space has been achieved by finite difference WENO reconstructions. We have formally proved that the scheme is AP. We have also proved the AA property in the stiff limit as the sonic Mach number $\eps \rightarrow 0$, if the SI IMEX-RK scheme is SA. Numerical experiments in 1D and 2D have demonstrated the divergence-free for all ranges of sonic Mach numbers, AP and AA properties in the low sonic Mach limit. Compared to the
explicit schemes as reference solutions, such as the ERK \cite{jiang1999high} and ERKC \cite{christlieb2014finite} schemes in 1D and 2D respectively, our IMEX scheme has almost the same performances as the ERK or ERKC scheme, but is much more efficient in the intermediate and low sonic Mach regimes.

\bibliographystyle{abbrv}
\bibliography{refer}

%

\end{document}